\documentclass{article}
\usepackage{graphicx}
\usepackage{amsmath}
\usepackage{amsfonts}
\usepackage[indent=10pt]{parskip}
\usepackage[english]{babel}
\usepackage{babel}
\usepackage{csquotes}
\usepackage{hyperref}
\usepackage{amsthm}
\usepackage{inputenc}
\usepackage{biblatex}
\theoremstyle{definition} 
\newtheorem{theorem}{Theorem}

\newtheorem{proposition}[theorem]{Proposition}
\newtheorem{remark}[theorem]{Remark}
\newtheorem{lemma}[theorem]{Lemma}
\theoremstyle{definition}

\usepackage{biblatex} 
\addbibresource{bibliography.bib}

\title{Extreme value theory for geometric Brownian motion and pricing of short maturity  options}
\author{Ng Ze-An}
\date{}

\begin{document}
\maketitle
\begin{abstract}
We investigate the limiting distribution of geometric Brownian motion conditional on its running maximum taking large values. The Freidlin-Wentzell large deviations theory predicts that the conditional distribution of the sample paths converge weakly to a deterministic exponential curve. We complement this result by showing that the conditional sample paths in fact converge in strong sense, and obtain quantitative bounds on the rate of convergence. As an application of our results to financial mathematics, we obtain new closed form asymptotic formulae for the fair price of barrier options with general path dependent payoff in the short maturity limit, with quantitative error estimates. We provide exact formulae for Asian and lookback style payoffs.
\end{abstract}

\section{Introduction}

A natural question in stochastic analysis is the behaviour of stochastic processes, conditional on an event of interest, such as taking a large maximum value. To this end, the Friedlin-Wentzell theory of large deviations provides a powerful framework to analyze such questions. Specialized to stochastic differential equations, the theory implies that conditional on a geometric Brownian motion taking a large maximum value over a fixed timeframe, the process converges weakly, after a suitable normalization to a deterministic exponential curve. 

Equivalently, the same behaviour occurs conditional on hitting a fixed value above its initial value over a short timeframe. As the time frame shrinks, we observe the same concentration along a deterministic exponential curve. We refer to the seminal work \cite{Arnold1984RandomPO} for a comprehensive introduction to the large deviations theory.

There are however, several limitations to the large deviations approach. The primary drawback is that it implies only weak convergence - that is, convergence in law of the process to the limiting deterministic curve. In particular, this means that functionals involving an expectation over sample paths, which form a natural and wide class of functionals, fail to be controlled by the concentration result. Second, it provides no quantitative bound on the convergence rates of the conditional distribution. 

In this work, we complement the Friedlin-Wentzell concentration result in two ways. First, we establish strong convergence of the sample paths - in particlar we show that conditional on hitting a fixed maximum value above its initial value in a short amount of time, the expectation of the supremum norm difference between the normalized paths and the determinsitic curve tends to zero as the timeframe $T$ involved tends to zero. Next, we establish quantitative bounds on the speed of convergence in terms of the timeframe involved in hitting the large maximum - we show that the supremum norm of the difference from the paths to the deterministic curve is at most $O(\sqrt T)$ in expectation as $T$ tends to $0$.

Substantial previous work has been done on the study of strong Friedlin-Wentzell type theorems, An early work in this area is \cite{HULT2005249}, which concerns the conditional strong convergence of pure jump Levy processes conditional on taking a large value. The authors observe a striking phenomenon - as the conditional running maximum grows, the conditional distributions of the Levy process converge to a process with only one jump from the starting value to the running maximum, which occurs at a randomly distributed time. This has been dubbed the "law of one jump" in the literature. These results have since been extended to include processes with weaker regularity, as well as integrals driven by Levy processes.

A significant gap in the current literature on strong Friedlin-Wentzell type convergence theorems is that all studies to date concern pure jump Levy processes - that is, Levy processes with no Brownian component. The case of processes with a continuous component remains a difficult problem. Our paper thus adds to this body of work by establishing an analogous result for the geometric Brownian motion. This complements the existing literature on pure jump processes with a study of Levy processes with pure Brownian component.

As a natural application of our result to financial mathematics, we consider the option pricing problem for barrier options with short maturity. Barrier options are financial instruments that provide a payoff depending on the historical prices of an underlying asset at a future time agreed upon in advance. The barrier component signifies that the option may only be exercised conditional on the price hitting a certain agreed upon price level, otherwise it defaults to a payoff of zero. Due to the sharp discontinuity in the payoff for barrier options, closed form expressions for the fair price of such options are notoriously difficult to obtain and relatively few in the literature.

As a primary consequence of our main results, we obtain closed form asymptotic formulae for the price of barrier options with general path dependent payoff in the short maturity regime, that is, for options priced very close to the maturity time. The payoff is general enough to include the Asian and lookback style payoff, for which we provide explicit formulae. While short maturity limits have been quite extensively studied for Asian options, the case of Barrier options seems to be unexplored to the best of our knowledge. Here, the strong convergence proves critical to obtaining our result, as the price of path-dependent options involves an expectation over sample paths, which weak convergence alone fails to control.

The rest of this paper is organized as follows. We conclude the introduction with a short review of additional related work. In Section 2, we state and prove the main theorem of the paper. In Section 3, we use our results to determine closed form asymptotic expressions for the price of short maturity barrier options with general payoff. We then specialize to Asian, and lookback type payoffs, where more explicit formulae can be provided. We conclude with Section 4, which states some natural further directions for research.

\subsection{Related work}
Strong Freidlin-Wentzell theorems for Levy processes was first studied in \cite{HULT2005249}, in which the authors prove the single jump limiting behaviour for pure jump Levy processes with a suitable notion of regular variation. The results are extended significantly by the same authors in \cite{Hult2007}. In \cite{rhee2017sample}, the authors establish a weak large deviations principle for this scenario, but conclude that a large deviations principle in the classical sense does not hold.

Short maturity options have been quite extensively studied in the Asian case. We list a few references. Pirjol and Zhu \cite{pirjol2016short} (2016) investigates pricing of short maturity Asian options in local volatility models, while Pirjol and Zhu \cite{Pirjolshort} (2017) investigates pricing in the CEV model, a well known stochastic volatility model for stock prices. Meanwhile, \cite{shoshi} uses large deviations theory to study short maturity Asian options in a jump diffusion model.

\section{Main Theorem}

Below we state the main theorem of our paper.
\begin{theorem}[Main Theorem]\label{maintheorem} Let $X$ be the solution to the SDE
$$
d X_{t}=\mu X_{t} d t+\sigma X_{t} d W_{t}, \quad X_{0}=1
$$
with $W$ a standard one dimensional Brownian motion, and $\mu, \sigma>0$ constants. Let $B>1$ be arbitrary. For every $T>0$, let $A_{T}$ denote the event
$$
\left\{\max _{0 \leq t \leq T} X_{t} \geq B\right\}
$$
and let $\mathbb{P}_{T}$ be the probability measure given by
$$
\mathbb{P}_{T}(E)=\frac{\mathbb{P}\left(E \cap A_{T}\right)}{\mathbb{P}\left(A_{T}\right)}
$$
for all events $E$. Denote by $\mathbb{E}_{\mathbb{P}_{T}}$ the expectation under $\mathbb{P}_{T}$. Then we have
$$
\mathbb{E}_{\mathbb{P}_{T}}\left[\sup _{0 \leq t \leq T}\left|X_{t}-B^{\frac{t}{T}}\right|\right]=O(\sqrt{T})
$$
as $T \rightarrow 0^{+}$, where the implied constants in the big $O$ notation depend only on $\mu, \sigma, B$.
\end{theorem}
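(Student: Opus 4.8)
The plan is to pass to the logarithm, where the problem becomes one about a Brownian motion with drift. By It\^o's formula the SDE has the explicit solution $X_t=\exp(Y_t)$ with $Y_t:=\sigma W_t+\nu t$ and $\nu:=\mu-\tfrac{\sigma^2}{2}$, and the comparison curve is $B^{t/T}=\exp\!\bigl((t/T)\ln B\bigr)$. Applying the mean value theorem to $\exp$ gives, for each $t$, $\bigl|X_t-B^{t/T}\bigr|\le\bigl|Y_t-(t/T)\ln B\bigr|\,\bigl(X_t+B^{t/T}\bigr)$; since $\ln B>0$ we have $\sup_{0\le t\le T}B^{t/T}=B$, so taking suprema
$$
\sup_{0\le t\le T}\bigl|X_t-B^{t/T}\bigr|\ \le\ \bigl(M_T+B\bigr)\,\sup_{0\le t\le T}\bigl|Y_t-\tfrac tT\ln B\bigr|,\qquad M_T:=\sup_{0\le t\le T}X_t=\exp\!\Bigl(\sup_{0\le t\le T}Y_t\Bigr).
$$
By the Cauchy--Schwarz inequality under $\mathbb{P}_T$, it then suffices to prove the two bounds $\mathbb{E}_{\mathbb{P}_T}[M_T^2]=O(1)$ and $\mathbb{E}_{\mathbb{P}_T}\bigl[\sup_{0\le t\le T}|Y_t-(t/T)\ln B|^2\bigr]=O(T)$ as $T\to0^+$, with constants depending only on $\mu,\sigma,B$; we may freely assume $T$ is below a threshold depending on $\mu,\sigma,B$.

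For the first bound, write $M_T=Be^{Z}$ with $Z:=\sup_{0\le t\le T}Y_t-\ln B\ge0$ on $A_T$, so $\mathbb{E}_{\mathbb{P}_T}[M_T^2]=B^2\,\mathbb{E}_{\mathbb{P}_T}[e^{2Z}]$. From $\mathbb{P}_T(Z>z)=\mathbb{P}(\sup_tY_t>\ln B+z)/\mathbb{P}(\sup_tY_t\ge\ln B)$, the reflection formula for the running maximum of a Brownian motion with drift, together with the elementary estimate $\overline\Phi(\alpha+\beta)\le\overline\Phi(\alpha)e^{-\alpha\beta}$ for $\alpha,\beta\ge0$ (applied with $\alpha$ of order $\ln B/(\sigma\sqrt T)$), gives $\mathbb{P}_T(Z>z)\le Ce^{-cz/T}$ for all $z\ge0$, where $c,C>0$ depend only on $\mu,\sigma,B$. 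Hence $\mathbb{E}_{\mathbb{P}_T}[e^{2Z}]=1+2\int_0^\infty e^{2z}\,\mathbb{P}_T(Z>z)\,dz$ is bounded (in fact it converges to $1$) once $T$ is small, proving $\mathbb{E}_{\mathbb{P}_T}[M_T^2]=O(1)$.

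For the second bound, set $\tau:=\inf\{t\ge0:Y_t=\ln B\}$; since $Y_0=0<\ln B$, continuity forces $\tau\le T$ and $Y_\tau=\ln B$ on $A_T$, and under $\mathbb{P}_T$ the law of $\tau$ is the first-passage density of $Y$ to $\ln B$ conditioned to lie in $(0,T)$. That density is an inverse Gaussian density, which for small $T$ is increasing on $(0,T)$ and decays near $T$ like $\exp\!\bigl(-(\ln B)^2(T-\tau)/(2\sigma^2T^2)\bigr)$; a short computation then gives $\mathbb{E}_{\mathbb{P}_T}[(T-\tau)^k]=O(T^{2k})$ for each fixed $k\ge1$. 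We split the supremum over $[0,\tau]$ and $[\tau,T]$. On $[\tau,T]$, by the strong Markov property $(Y_{\tau+s})_{s\ge0}$ is a Brownian motion with drift $\nu$ started at $\ln B$; estimating $|Y_t-(t/T)\ln B|\le|Y_t-\ln B|+(\ln B)(T-t)/T$ by standard Gaussian bounds, this part is $O(\mathbb{E}_{\mathbb{P}_T}[T-\tau])+O(T^{-2}\mathbb{E}_{\mathbb{P}_T}[(T-\tau)^2])=O(T^2)$ in $L^2(\mathbb{P}_T)$. On $[0,\tau]$, write $Y_t-(t/T)\ln B=\bigl(Y_t-(t/\tau)Y_\tau\bigr)+t\ln B\,(1/\tau-1/T)$; for $t\le\tau$ the second term is at most $(\ln B)(T-\tau)/T$, again $O(T^2)$ in $L^2(\mathbb{P}_T)$. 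For the first term, note that conditionally on $\tau=r$ the law of $(Y_t)_{0\le t\le r}$ is unchanged if the drift $\nu$ is removed, because the Girsanov density on the stopped $\sigma$-field $\mathcal{F}_\tau$ equals $\exp\!\bigl(\nu\ln B/\sigma^2-\nu^2\tau/(2\sigma^2)\bigr)$, a function of $\tau$ alone; hence by Williams' time-reversal identity, given $\tau=r$, the path $(Y_t)_{0\le t\le r}$ has the law of $(\ln B-\sigma\rho_{r-t})_{0\le t\le r}$ with $\rho$ a three-dimensional Bessel bridge from $0$ to $\ln B/\sigma$ on $[0,r]$. Writing $\rho$ as the Euclidean norm of a three-dimensional Brownian bridge and comparing with its coordinates, $\sup_{t\le r}|Y_t-(t/r)Y_\tau|=\sigma\sup_{u\le r}|\rho_u-(u/r)(\ln B/\sigma)|$ is dominated by a sum of suprema of one-dimensional Brownian bridges on $[0,r]$, so $\mathbb{E}[\sup_{t\le r}|Y_t-(t/r)Y_\tau|^2\mid\tau=r]=O(r)$; averaging over $\tau$ yields $O(\mathbb{E}_{\mathbb{P}_T}[\tau])=O(T)$. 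Collecting the four contributions gives the second bound, and hence the theorem.

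I expect the main obstacle to be the analysis on $[0,\tau]$: one must simultaneously pin the conditioned first-passage time $\tau$ to within $O(T^2)$ of $T$ (via the explicit inverse Gaussian density, carrying along the bounded multiplicative drift correction) and control the fluctuations of the conditioned-to-first-hit path about the line joining $0$ and $\ln B$, for which the passage through a Bessel bridge and then ordinary Brownian bridges is the natural route --- a crude triangle inequality there produces only an $O(1)$ bound and is useless. The remaining ingredients --- the overshoot estimate, the strong-Markov piece on $[\tau,T]$, and the Cauchy--Schwarz split --- are routine Gaussian and reflection-principle computations, the only genuine care being to keep every implied constant dependent on $\mu,\sigma,B$ alone.
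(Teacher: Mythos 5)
Your proposal is correct in outline (the sketched computations do check out), but it proves the theorem by a genuinely different route than the paper. The paper never conditions on the running maximum directly in its core estimates: it first proves bridge-type bounds under the simpler conditioning $\{W_T\ge M-f(T)\}$ on the terminal value (its Lemmas 1--3, via the decomposition $W_t=\tfrac tT W_T+B_t$ with $B$ an independent Brownian bridge), then transfers these to $\mathbb{P}_T$ through an absorption inequality of the form $\mathbb{E}_{\mathbb{P}_T}[\cdot]\le \mathbb{E}_{\mathbb{Q}}[\cdot]+C\,\mathbb{E}_{\mathbb{P}_T}[\cdot]+(\text{post-}\tau\text{ term})$ with $C<1$, obtained by showing $\mathbb{P}(Y_T\mid\mathcal{F}_\tau)\ge\tfrac13$, and controls the post-hitting segment with the strong Markov property together with the hitting-time concentration $\mathbb{P}(\tau\ge(1-\sqrt T)T\mid\tau\le T)\to1$ (its Lemma 5) and the deterministic bound $|B-B^{1-\sqrt T}|=O(\sqrt T)$. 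You instead work directly under $\mathbb{P}_T$ on the logarithmic scale: Cauchy--Schwarz splits off $\sup_t X_t$, which you control by an overshoot tail bound $\mathbb{P}_T(Z>z)\le Ce^{-cz/T}$ for the conditioned running maximum, and you bound the path fluctuation in $L^2$ by splitting at the hitting time, using sharp inverse-Gaussian moments $\mathbb{E}_{\mathbb{P}_T}[(T-\tau)^k]=O(T^{2k})$ (a quantitative strengthening of the paper's Lemma 5) for the post-$\tau$ piece, and, for the pre-$\tau$ piece, the observation that the Girsanov density on $\mathcal{F}_\tau$ is a function of $\tau$ alone followed by Williams' time reversal, so that the conditioned path is a time-reversed BES(3) bridge whose deviation from the chord is dominated by one-dimensional Brownian bridges, giving the dominant $O(\sqrt T)$ contribution. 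What each buys: the paper's argument is more elementary and self-contained (reflection principle and Gaussian tail estimates only), at the price of the somewhat delicate absorption step and a two-stage proof (first $o(1)$, then the rate); yours delivers the $O(\sqrt T)$ rate in one pass, in fact an $L^2$ bound on the log scale, and cleanly identifies the pre-hitting bridge fluctuation as the source of the $\sqrt T$, but it leans on heavier known results (the conditioned Williams decomposition, the norm-of-a-3d-bridge identification) and leaves the overshoot and inverse-Gaussian computations, as well as the optional-stopping justification of the drift removal on $\mathcal{F}_{\tau\wedge T}$, at sketch level -- all of which do go through with constants depending only on $\mu,\sigma,B$.
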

The majority of our paper will be focused on proving Theorem \ref{maintheorem}. As the proof is lengthy and technical, we seperate the proof of Theorem 1 into a series of four lemmas, followed by the main proof.

\subsection{Technical Lemmas}
First we make some preliminary definitions. For each $M \geq 0$ and $T>0$, denote by $H_{M, T}$ the event $\left\{W_{T} \geq M\right\}$, and let $\mathbb{Q}_{M, T}$ be the probability measure given by
$$
\mathbb{Q}_{M, T}(E)=\frac{\mathbb{P}\left(E \cap H_{M, T}\right)}{\mathbb{P}\left(H_{M, T}\right)}
$$
for all events $E \subset \Omega$. Throughout the first three lemmas, we assume that $f:(0, \infty) \rightarrow \mathbb{R}$ is a function such that $f(x)=O(x)$ as $x \rightarrow 0^{+}$.
\begin{lemma}. We have
$$
\mathbb{E}_{\mathbb{Q}_{M-f(T), T}}\left[\left|W_{T}-M\right|\right]=O(T)
$$
as $T \rightarrow 0^{+}$, where the implied constant in the $O$ notation depends only on $f, M$.
\end{lemma}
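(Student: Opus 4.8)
The plan is to reduce the statement to a single Gaussian tail estimate. Write $W_T = \sqrt{T}\,Z$ with $Z \sim N(0,1)$ and set $a_T := (M - f(T))/\sqrt{T}$. Since $f(T) = O(T) = o(\sqrt{T})$ and $M > 0$, we have $a_T \to +\infty$ as $T \to 0^+$; more precisely, for all sufficiently small $T$ one has $f(T) \le M/2$, so $a_T \ge M/(2\sqrt{T})$, and it is this that keeps the implied constants dependent only on $f$ and $M$. Under $\mathbb{Q}_{M-f(T),T}$ the variable $Z$ has the law of a standard normal conditioned on $\{Z \ge a_T\}$ (because $\{W_T \ge M - f(T)\} = \{Z \ge a_T\}$), and one has the identity
\[
W_T - M = \sqrt{T}\,(Z - a_T) - f(T).
\]

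The single quantitative input I would establish is that, for $a > 1$,
\[
\mathbb{E}[(Z - a)^2 \mid Z \ge a] \le \frac{2}{a^2 - 1}.
\]
This follows by substituting $y = a + s$ in the numerator, which gives $\int_a^\infty (y-a)^2 \phi(y)\,dy = \phi(a)\int_0^\infty s^2 e^{-as - s^2/2}\,ds \le \phi(a)\int_0^\infty s^2 e^{-as}\,ds = 2\phi(a)/a^3$, where $\phi$ denotes the standard normal density, together with the lower Mills-ratio bound $\mathbb{P}(Z \ge a) \ge (a^{-1} - a^{-3})\phi(a)$. Granting this, the lemma is immediate: from $(W_T - M)^2 \le 2T(Z - a_T)^2 + 2 f(T)^2$ and the displayed estimate,
\[
\mathbb{E}_{\mathbb{Q}_{M-f(T),T}}[(W_T - M)^2] \le \frac{4T}{a_T^2 - 1} + 2 f(T)^2 = O(T^2),
\]
since $a_T^2$ is of order $1/T$ and $f(T) = O(T)$; the conclusion then follows from the Cauchy--Schwarz inequality $\mathbb{E}_{\mathbb{Q}_{M-f(T),T}}[|W_T - M|] \le (\mathbb{E}_{\mathbb{Q}_{M-f(T),T}}[(W_T - M)^2])^{1/2}$.

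None of this is deep; the only point that needs care is keeping the Gaussian estimates uniform as $T \to 0^+$, that is, tracking that the exceedance threshold $a_T$ grows like $M/\sqrt{T}$ so that the $O(1/a_T^2)$ conditional-variance factor, after multiplication by the Brownian scaling factor $T$, produces exactly the claimed $O(T^2)$ bound on the conditional second moment (hence $O(T)$ after the square root). I note in passing that the hypothesis $M > 0$ is genuinely used here, and that the same bound could instead be obtained directly, without the second moment, by splitting the expectation over $\{M - f(T) \le W_T \le M\}$ (where $|W_T - M| \le f(T) = O(T)$ trivially) and $\{W_T > M\}$ (where one again invokes the Mills-ratio bounds).
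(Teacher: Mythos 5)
Your proof is correct, and it takes a somewhat different route from the paper's. The paper works with the conditional \emph{first} moment directly: it evaluates $\int_{r(T)}^{\infty}x e^{-x^{2}/2T}\,dx = T e^{-r(T)^{2}/2T}$ in closed form, uses the Mills-ratio asymptotic $\mathbb{P}(Z\geq x)=(1+O(x^{-2}))\phi(x)/x$ for the denominator to get $\mathbb{E}[\,W_T-r(T)\mid W_T\geq r(T)\,]=O(T/r(T))$ with $r(T)=M-f(T)$, and then removes the $f(T)$ shift by the triangle inequality. You instead rescale to a standard normal, prove the conditional second-moment bound $\mathbb{E}[(Z-a)^{2}\mid Z\geq a]\leq 2/(a^{2}-1)$ via the substitution $y=a+s$ and the lower Mills-ratio bound, absorb the $f(T)$ shift through the exact identity $W_T-M=\sqrt{T}(Z-a_T)-f(T)$, and finish with Cauchy--Schwarz. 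Both arguments hinge on the same Gaussian tail behaviour (overshoot of mean order $T/M$ above a level of order $M$), and your bookkeeping of where $M>0$ and $f(T)=O(T)$ enter is accurate; indeed the paper states the lemma for $M\geq 0$ but, as you note, its $O(T/r(T))$ step also implicitly needs $r(T)$ bounded away from $0$, which holds in the paper's application where $M=\log B/\sigma>0$. The paper's route buys the sharp leading-order constant for the conditional mean overshoot; yours loses a constant factor but avoids the $(1+O(T/r(T)^{2}))^{-1}$ expansion, needs only a one-sided Mills bound, and delivers a conditional second-moment estimate of order $T^{2}$ that is slightly stronger information (it would, for instance, give the tail bound used later in Lemma 2 via Chebyshev rather than Markov). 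Either argument yields the stated $O(T)$ with constants depending only on $M$ and the implied constant for $f$.
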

\begin{proof}Since $W_{T}$ is a normal random variable with mean 0 and variance $T$, for any bounded nonnegative function $r:[0, \infty) \rightarrow \mathbb{R}$ we have
$$
\begin{aligned}
\mathbb{E}\left[\left|W_{T}-r(T)\right| \mid W_{T} \geq r(T)\right] & =\frac{(2 \pi T)^{-1 / 2} \int_{r(T)}^{\infty} x e^{-\frac{x^{2}}{2 T}} d x}{\mathbb{P}\left(W_{T} \geq r(T)\right)}-r(T) \\
& =\frac{(2 \pi T)^{-1 / 2} \int_{r(T)}^{\infty} x e^{-\frac{x^{2}}{2 T}} d x}{\mathbb{P}\left(Z \geq \frac{r(T)}{\sqrt{T}}\right)}-r(T)
\end{aligned}
$$
where in the second line $Z$ is a standard normal random variable. Writing $\phi$ for the density of the standard normal, noting that $\mathbb{P}(Z \geq x)=\left(1+O\left(\frac{1}{x^{2}}\right)\right) \frac{\phi(x)}{x}$ (see for example, \cite{patel1996handbook}, Chapter 3), we have
$$
\mathbb{E}\left[\left|W_{T}-r(T)\right| \mid W_{T} \geq r(T)\right]=\frac{(2 \pi T)^{-1 / 2} \int_{r(T)}^{\infty} x e^{-\frac{x^{2}}{2 T}} d x}{\left(1+O\left(\frac{T}{r(T)^{2}}\right)\right) \phi\left(\frac{r(T)}{\sqrt{T}}\right) / \frac{r(T)}{\sqrt{T}}}-r(T)
$$

We find by elementary calculus,
$$
\int_{r(T)}^{\infty} x e^{-\frac{x^{2}}{2 T}} d x=T e^{-\frac{r(T)^{2}}{2 T}}
$$

Substituting this into the above, we find
$$
\begin{aligned}
\mathbb{E}\left[\left|W_{T}-r(T)\right| \mid W_{T} \geq r(T)\right] & =\left(\frac{1}{1+O\left(\frac{T}{r(T)^{2}}\right)}\right) r(T)-r(T) \\
& =\left(1+O\left(\frac{T}{r(T)^{2}}\right)\right) r(T)-r(T) \\
& =O\left(\frac{T}{r(T)}\right)
\end{aligned}
$$
as $T \rightarrow 0^{+}$. Setting $r(T)=M-f(T)$, we find that
$$
\mathbb{E}\left[\left|W_{T}-(M-f(T))\right| \mid W_{T} \geq M-f(T)\right]=O(T)
$$
with the implied constant depending only on $M$. Applying the triangle inequality, and recalling that $f(T)$ is of order $O(T)$ then concludes the proof.
\end{proof}
\begin{lemma}For any constant $c>0$, we have
$$
\mathbb{E}_{\mathbb{Q}_{M-f(T), T}}\left[\left|e^{c W_{T}}-e^{c M}\right|\right]=O(\sqrt{T})
$$
as $T \rightarrow 0^{+}$, with the implied constant depending only on $f, c, M$.
\end{lemma}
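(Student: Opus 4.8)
The natural approach is to reduce the claim about $e^{cW_T}$ to the claim about $W_T$ that we just proved in Lemma 2. The elementary fact I would use is the mean value theorem applied to $x \mapsto e^{cx}$: for any real $a,b$ one has $|e^{ca}-e^{cb}| = c|a-b| e^{c\xi}$ for some $\xi$ between $a$ and $b$, hence
\[
\left|e^{cW_T}-e^{cM}\right| \;\le\; c\,\bigl|W_T - M\bigr|\;\bigl(e^{cW_T}+e^{cM}\bigr).
\]
So it suffices to bound $\mathbb{E}_{\mathbb{Q}_{M-f(T),T}}\!\left[\bigl|W_T-M\bigr|\,\bigl(e^{cW_T}+e^{cM}\bigr)\right]$. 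The $e^{cM}$ term is handled immediately: it is a constant multiple of $\mathbb{E}_{\mathbb{Q}_{M-f(T),T}}\!\left[|W_T-M|\right]$, which is $O(T) = O(\sqrt T)$ by Lemma 2. The real work is the term $\mathbb{E}_{\mathbb{Q}_{M-f(T),T}}\!\left[|W_T-M|\,e^{cW_T}\right]$.

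For that term I would either apply Cauchy--Schwarz under $\mathbb{Q}_{M-f(T),T}$, splitting it as $\bigl(\mathbb{E}_{\mathbb{Q}}[|W_T-M|^2]\bigr)^{1/2}\bigl(\mathbb{E}_{\mathbb{Q}}[e^{2cW_T}]\bigr)^{1/2}$, or — cleaner — go back to the explicit Gaussian density, exactly as in the proof of Lemma 2. Writing $r(T) = M - f(T)$ and unfolding the definition of $\mathbb{Q}_{M-f(T),T}$, the quantity is
\[
\frac{(2\pi T)^{-1/2}\int_{r(T)}^{\infty} |x - M|\, e^{cx}\, e^{-x^2/(2T)}\,dx}{\mathbb{P}\bigl(W_T \ge r(T)\bigr)}.
\]
Completing the square in the exponent, $e^{cx}e^{-x^2/(2T)} = e^{c^2 T/2}\, e^{-(x-cT)^2/(2T)}$, so the numerator becomes $e^{c^2T/2}(2\pi T)^{-1/2}\int_{r(T)}^\infty |x-M| e^{-(x-cT)^2/(2T)}\,dx$. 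This is $e^{c^2T/2}$ times $\mathbb{P}(W_T \ge r(T) - cT)\cdot \mathbb{E}\bigl[|W_T + cT - M| \mid W_T \ge r(T) - cT\bigr]$ after the shift $x \mapsto x + cT$. Since $cT = O(T)$, the shifted threshold $r(T) - cT$ is still of the form $M - \tilde f(T)$ with $\tilde f(T) = f(T) + cT = O(T)$, so Lemma 2 applies verbatim to give $\mathbb{E}\bigl[|W_T + cT - M| \mid W_T \ge r(T) - cT\bigr] = O(T)$ (one more triangle inequality absorbs the extra $cT$). It remains to control the ratio $\mathbb{P}(W_T \ge r(T) - cT)/\mathbb{P}(W_T \ge r(T))$, which by the Mills ratio asymptotics $\mathbb{P}(Z \ge x) = (1+O(1/x^2))\phi(x)/x$ already used in Lemma 2 is $e^{O(\sqrt T)} = 1 + O(\sqrt T)$ — this is where the $\sqrt T$ rather than $T$ enters, since $\phi\bigl((r(T)-cT)/\sqrt T\bigr)/\phi\bigl(r(T)/\sqrt T\bigr)$ carries a factor $\exp\bigl(\Theta(\sqrt T)\bigr)$ from the cross term. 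Combining the bounded ratio with the $O(T)$ conditional expectation and the $e^{c^2T/2} = 1+O(T)$ prefactor gives $O(T) \cdot O(1) = O(T)$ for this term too; the $O(\sqrt T)$ in the statement is then comfortably satisfied, coming at worst from the Mills-ratio comparison.

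The main obstacle is bookkeeping rather than conceptual: one must be careful that after the square-completion shift the new threshold is still handled by Lemma 2 (it is, since the perturbation remains $O(T)$), and one must track the ratio of tail probabilities at the two nearby thresholds carefully enough to see it stays bounded as $T\to 0^+$. I would be slightly cautious about the sign: $r(T) - cT$ could in principle be negative for small $M$, but since $M$ is fixed and $f(T), cT \to 0$, for $T$ small enough $r(T) - cT$ is bounded below by $M/2 > 0$, so the Mills ratio asymptotics (valid for $x\to+\infty$, i.e. threshold over $\sqrt T$ tending to $+\infty$) apply on both thresholds. Everything else is the triangle inequality and the computation $\int_a^\infty x e^{-x^2/(2T)}\,dx = Te^{-a^2/(2T)}$ already recorded above.
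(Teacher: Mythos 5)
Your proof is correct, but it takes a genuinely different route from the paper. The paper does not use the mean value theorem or any Gaussian tilting: it first shows that the \emph{mean} satisfies $\left|\mathbb{E}_{\mathbb{Q}_{M-f(T),T}}[e^{cW_T}]-e^{cM}\right|=O(T)$ via a stopping-time/strong Markov argument and Taylor expansion, then uses Markov's inequality together with the preceding lemma to get $\mathbb{Q}_{M-f(T),T}\left(|W_T-M|\ge\sqrt{T}\right)=O(\sqrt{T})$, and finally splits $\mathbb{E}\left[\left|e^{cW_T}-e^{cM}\right|\right]$ over the events $\{|W_T-M|<\sqrt{T}\}$ and $\{|W_T-M|\ge\sqrt{T}\}$, handling the latter by an $A_T\pm B_T$ decomposition that trades absolute values for the signed quantity controlled by the mean estimate; the truncation at level $\sqrt{T}$ is precisely what caps the paper's rate at $O(\sqrt{T})$. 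You instead bound $\left|e^{cW_T}-e^{cM}\right|\le c\,|W_T-M|\left(e^{cW_T}+e^{cM}\right)$ pointwise and evaluate the weighted moment $\mathbb{E}_{\mathbb{Q}_{M-f(T),T}}\left[|W_T-M|\,e^{cW_T}\right]$ by completing the square (an exponential tilt shifting the threshold by $cT$), which reduces everything to the first lemma at the perturbed threshold $M-(f(T)+cT)$; this avoids truncation entirely and actually yields the sharper rate $O(T)$, which of course implies the stated $O(\sqrt{T})$. One side remark in your write-up is off, though harmlessly so: the ratio $\mathbb{P}\left(W_T\ge r(T)-cT\right)/\mathbb{P}\left(W_T\ge r(T)\right)$ is not $1+O(\sqrt{T})$, since the cross term in the Gaussian densities is $\exp\!\left(c\,r(T)-c^2T/2\right)\to e^{cM}$, a constant of order $\Theta(1)$ rather than $\exp\!\left(\Theta(\sqrt{T})\right)$; your argument only uses boundedness of this ratio (with constant depending on $c,M$), so the conclusion stands, but the $\sqrt{T}$ in the lemma's statement does not ``come from the Mills-ratio comparison''---in your approach nothing of order $\sqrt{T}$ ever appears, which is exactly why your bound is stronger than the paper's.
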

\begin{proof} Set $\tau_{T}:=\inf \{t>0 \mid \tau \geq M-f(T)\}$. Then we have
$$
\begin{aligned}
\mathbb{E}_{\mathbb{Q}_{M-f(T), T}}\left[e^{c W_{T}}\right] & =\mathbb{E}_{\mathbb{Q}_{M-f(T), T}}\left[e^{c(M-f(T))} e^{c\left(W_{T}-W_{\tau_{T}}\right)}\right] \\
& =e^{c(M-f(T))} \mathbb{E}\left[e^{c\left(W_{T}-W_{\tau_{T}}\right)}\right] \\
& =e^{c(M-f(T))} \exp \left(\frac{c^{2}\left(T-\tau_{T}\right)}{2}\right)
\end{aligned}
$$
which tends to $e^{c M}$ as $T \rightarrow 0^{+}$. In fact, Taylor expanding the exponentials to first order shows that $\mathbb{E}_{\mathbb{Q}_{M-f(T), T}}\left[e^{c W_{T}}\right]$ $e^{c M}$ is of order $O(T)+O(f(T))=O(T)$. Indeed, observe that
$$
\begin{aligned}
\left|\mathbb{E}_{\mathbb{Q}_{M-f(T), T}}\left[e^{c W_{T}}\right]-e^{c M}\right| & =\left\lvert\, e^{c M}\left(\left.1-e^{-f(T)} \exp \left(\frac{c^{2}\left(T-\tau_{T}\right)}{2}\right) \right\rvert\,\right.\right. \\
& =\left|e^{c M}\left(1-(1-f(T)+o(T))\left(1+\frac{c^{2}\left(T-\tau_{T}\right)}{2}+o(T)\right)\right)\right| \\
& =\left|e^{c M}\left(f(T)-\frac{c^{2}\left(T-\tau_{T}\right)}{2}+o(T)\right)\right| \\
& \leq e^{c M}\left(f(T)+\frac{c^{2} T}{2}+o(T)\right) \\
& =O(f(T))+O(T) \\
& =O(T)
\end{aligned}
$$
as claimed. Next, by the Markov inequality we have, for every $\delta>0$,
$$
\mathbb{Q}_{M-f(T), T}\left[\left|W_{T}-M\right| \geq \delta\right] \leq \frac{\mathbb{E}_{\mathbb{Q}_{M-f(T), T}}\left[\left|W_{T}-M\right|\right]}{\delta}
$$

Setting $\delta=\sqrt{T}$, and recalling Lemma 1 , we thus obtain that

\begin{equation}
\mathbb{Q}_{M-f(T), T}\left[\left|W_{T}-M\right| \geq \sqrt{T}\right]=O(\sqrt{T}) 
\end{equation}

Now we compute
$$
\begin{aligned}
& \mathbb{E}_{\mathbb{Q}_{M-f(T), T}}\left[\left|e^{c W_{T}}-e^{c M}\right|\right] \\
& =\mathbb{E}_{\mathbb{Q}_{M-f(T), T}}\left[\mathbf{1}_{\left\{\left|W_{T}-M\right|<\sqrt{T}\right\}}\left|e^{c W_{T}}-e^{c M}\right|\right]+\mathbb{E}_{\mathbb{Q}_{M-f(T), T}}\left[\mathbf{1}_{\left\{\left|W_{T}-M\right| \geq \sqrt{T}\right\}}\left|e^{c W_{T}}-e^{c M}\right|\right] \\
& \leq O(\sqrt{T})+E_{\mathbb{Q}_{M-f(T), T}}\left[\mathbf{1}_{\left\{\left|W_{T}-M\right| \geq \sqrt{T}\right\}}\left|e^{c W_{T}}-e^{c M}\right|\right] .
\end{aligned}
$$

Hence it will suffice to show that the second term above is of order $O(\sqrt{T})$. We write said term as $A_{T}+B_{T}$, where
$$
\begin{aligned}
A_{T} & :=\mathbb{E}_{\mathbb{Q}_{M-f(T), T}}\left[\mathbf{1}_{\left\{W_{T}-M \geq \sqrt{T}\right\}}\left|e^{c W_{T}}-e^{c M}\right|\right] \\
B_{T} & :=\mathbb{E}_{\mathbb{Q}_{M-f(T), T}}\left[\mathbf{1}_{\left\{W_{T}-M \leq-\sqrt{T}\right\}}\left|e^{c W_{T}}-e^{c M}\right|\right] .
\end{aligned}
$$

Observe that $B_{T}=O(\sqrt{T})$. Indeed,
$$
\begin{aligned}
B_{T} & =e^{c M} \mathbb{E}_{\mathbb{Q}_{M-f(T), T}}\left[\mathbf{1}_{\left\{W_{T}-M \leq-\sqrt{T}\right\}}\left|e^{c\left(W_{T}-M\right)}-1\right|\right] . \\
& \leq e^{c M} \mathbb{E}_{\mathbb{Q}_{M-f(T), T}}\left[\mathbf{1}_{\left\{W_{T}-M \leq-\sqrt{T}\right\}}|e+1|\right] . \\
& \leq(e+1) e^{c M} \mathbb{E}_{\mathbb{Q}_{M-f(T), T}}\left[\mathbf{1}_{\left\{\left|W_{T}-M\right| \geq \sqrt{T}\right\}}\right] \\
& =(e+1) e^{c M} O(\sqrt{T}) \\
& =O(\sqrt{T}) .
\end{aligned}
$$
where in the second to last line, we have applied Equation (1). Now we rewrite $A_{T}+B_{T}$ as $A_{T}-B_{T}+2 B_{T}$, and note that
$$
\begin{aligned}
A_{T}-B_{T} & =\mathbb{E}_{\mathbb{Q}_{M-f(T), T}}\left[\mathbf{1}_{\left\{\left|W_{T}-M\right| \geq \sqrt{T}\right\}}\left(e^{c\left(W_{T}-M\right)}-1\right)\right] \\
& =e^{-c M}\left[\left(\mathbb{E}_{\mathbb{Q}_{M-f(T), T}}\left[e^{c W_{T}}\right]-e^{c M}\right)-\mathbb{E}_{\mathbb{Q}_{M-f(T), T}}\left[\mathbf{1}_{\left\{\left|W_{T}-M\right|<\sqrt{T}\right\}}\left(e^{c W_{T}}-e^{c M}\right)\right]\right] .
\end{aligned}
$$

Since the term in brackets is of order $O(T)$ by the earlier discussion, and the latter term is of order $O(\sqrt{T})$, as can be seen by say, Taylor expansion, we obtain that
$$
A_{T}+B_{T}=O(T)+O(\sqrt{T})=O(\sqrt{T})
$$
as desired.
\end{proof}
\begin{lemma}We have
$$
\mathbb{E}_{\mathbb{Q}_{M-f(T), T}}\left[\sup _{0 \leq t \leq T}\left|X_{t}-e^{\frac{t}{T} \sigma M}\right|\right]=O(\sqrt{T})
$$
as $T \rightarrow 0^{+}$, with the implied constant depending only on $f, M$.
\end{lemma}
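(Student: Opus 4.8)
The plan is to reduce the whole estimate to the Brownian-bridge fluctuations of $W$ on $[0,T]$ together with overshoot bounds of the kind already obtained in Lemmas 1 and 2. Since $X_t=\exp\!\big(\sigma W_t+(\mu-\tfrac{\sigma^2}{2})t\big)$, I would first invoke the classical decomposition $W_t=\tfrac{t}{T}W_T+B^{\mathrm{br}}_t$, where $B^{\mathrm{br}}_t:=W_t-\tfrac{t}{T}W_T$ is a Brownian bridge on $[0,T]$ independent of $W_T$ under $\mathbb{P}$. A one-line check, using that $\mathbb{Q}_{M-f(T),T}$ only reweights by the $W_T$-measurable event $\{W_T\ge M-f(T)\}$, shows that under $\mathbb{Q}_{M-f(T),T}$ the bridge $B^{\mathrm{br}}$ keeps its law and its independence from $W_T$. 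Substituting and setting $C:=|\mu-\tfrac{\sigma^2}{2}|$ gives the exact identity
$$X_t-e^{\frac{t}{T}\sigma M}=e^{\frac{t}{T}\sigma M}\big(e^{R_t}-1\big),\qquad R_t:=\sigma\tfrac{t}{T}(W_T-M)+\sigma B^{\mathrm{br}}_t+(\mu-\tfrac{\sigma^2}{2})t.$$

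Since $\sigma M\ge 0$ we have $e^{\frac{t}{T}\sigma M}\le e^{\sigma M}$ for $t\in[0,T]$, and since $t/T\le1$ we have $\Lambda:=\sup_{0\le t\le T}|R_t|\le \sigma|W_T-M|+\sigma S+CT$, where $S:=\sup_{0\le t\le T}|B^{\mathrm{br}}_t|$. Using $|e^r-1|\le|r|e^{|r|}$ together with monotonicity of $x\mapsto xe^x$ on $[0,\infty)$ yields the pathwise bound $\sup_{0\le t\le T}\big|X_t-e^{\frac{t}{T}\sigma M}\big|\le e^{\sigma M}\,\Lambda e^{\Lambda}$. I would then split $\Lambda=P+Q$ with $P:=\sigma|W_T-M|+CT$ (a function of $W_T$) and $Q:=\sigma S$ (a function of the bridge), expand $(P+Q)e^{P+Q}=Pe^P\cdot e^Q+e^P\cdot Qe^Q$, and use the independence just noted to factor:
$$\mathbb{E}_{\mathbb{Q}_{M-f(T),T}}\!\big[\Lambda e^{\Lambda}\big]=\mathbb{E}_{\mathbb{Q}_{M-f(T),T}}[Pe^P]\;\mathbb{E}[e^Q]+\mathbb{E}_{\mathbb{Q}_{M-f(T),T}}[e^P]\;\mathbb{E}[Qe^Q],$$
where the bridge expectations are under $\mathbb{P}$ (equivalently, under $\mathbb{Q}_{M-f(T),T}$).

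It then remains to establish four estimates: (i) $\mathbb{E}[e^{\sigma S}]=O(1)$; (ii) $\mathbb{E}[Se^{\sigma S}]=O(\sqrt T)$; (iii) $\mathbb{E}_{\mathbb{Q}_{M-f(T),T}}[e^{\sigma|W_T-M|}]=O(1)$; (iv) $\mathbb{E}_{\mathbb{Q}_{M-f(T),T}}[|W_T-M|e^{\sigma|W_T-M|}]=O(\sqrt T)$. For (i) and (ii), Brownian scaling gives $S\overset{d}{=}\sqrt T\,\widetilde S$ with $\widetilde S:=\sup_{0\le s\le1}|\beta_s|$ for a standard Brownian bridge $\beta$ on $[0,1]$; since $\widetilde S$ has Gaussian tails, $\mathbb{E}[e^{\sigma\sqrt T\widetilde S}]\le\mathbb{E}[e^{\sigma\widetilde S}]<\infty$ for $T\le1$ and $\mathbb{E}[\sqrt T\,\widetilde S\,e^{\sigma\sqrt T\widetilde S}]\le\sqrt T\,\mathbb{E}[\widetilde S e^{\sigma\widetilde S}]=O(\sqrt T)$. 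For (iii) and (iv) (taking $M>0$; the degenerate case $M=0$ involves no high-level conditioning and reduces to elementary Gaussian moment bounds), I would pass to the nonnegative overshoot $O_T:=W_T-(M-f(T))$: on $\{W_T\ge M\}$ one has $|W_T-M|\le O_T+|f(T)|$ with $|f(T)|=O(T)$, and on $\{W_T<M\}$ one has $|W_T-M|\le|f(T)|=O(T)$ trivially, so it suffices to bound $\mathbb{E}_{\mathbb{Q}_{M-f(T),T}}[e^{aO_T}]$ and $\mathbb{E}_{\mathbb{Q}_{M-f(T),T}}[O_Te^{aO_T}]$. Completing the square exactly as in the proof of Lemma 2 gives $\mathbb{E}_{\mathbb{Q}_{M-f(T),T}}[e^{aO_T}]=1+O(T)$ for each fixed $a>0$, and a second-moment computation of the same Mills-ratio type as in Lemma 1 gives $\mathbb{E}_{\mathbb{Q}_{M-f(T),T}}[O_T^2]=O(T^2)$; Cauchy--Schwarz then yields $\mathbb{E}_{\mathbb{Q}_{M-f(T),T}}[O_Te^{\sigma O_T}]\le\mathbb{E}_{\mathbb{Q}_{M-f(T),T}}[O_T^2]^{1/2}\,\mathbb{E}_{\mathbb{Q}_{M-f(T),T}}[e^{2\sigma O_T}]^{1/2}=O(T)$, which gives (iii) and even a bound $O(T)$ in (iv). Feeding (i)--(iv) into the factorization gives $\mathbb{E}_{\mathbb{Q}_{M-f(T),T}}[\Lambda e^{\Lambda}]=O(T)\cdot O(1)+O(1)\cdot O(\sqrt T)=O(\sqrt T)$, and hence the stated bound.

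The main point requiring care is the pair of exponential-moment controls under the conditioned measure, namely $\mathbb{E}_{\mathbb{Q}_{M-f(T),T}}[e^{\sigma|W_T-M|}]=O(1)$ and its weighted variant: these are not accessible by a soft argument and need completing the square together with the Mills-ratio asymptotics, which is precisely why Lemmas 1 and 2 were set up the way they are. The structural device that keeps the proof short is the observation that conditioning on $\{W_T\ge M-f(T)\}$ does not disturb the Brownian bridge $B^{\mathrm{br}}$, so that the endpoint behaviour of $W$ and its fluctuation about the straight line $\tfrac{t}{T}W_T$ decouple under $\mathbb{Q}_{M-f(T),T}$; the $\sqrt T$ rate in the lemma then comes entirely from the bridge term $\mathbb{E}[Se^{\sigma S}]=O(\sqrt T)$, the endpoint contributing only $O(T)$.
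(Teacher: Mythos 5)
Your argument is correct, and at the structural level it runs parallel to the paper's: the explicit form of $X$, the decomposition $W_t=\tfrac{t}{T}W_T+B_t$ with a Brownian bridge independent of $W_T$, the observation that conditioning on the $W_T$-measurable event $\{W_T\ge M-f(T)\}$ leaves the bridge's law and independence intact, and a factorization of the expectation into an endpoint factor and a bridge-fluctuation factor. The execution, however, differs in both halves. The paper splits additively by the triangle inequality into $\sup_t|e^{\sigma t W_T/T}-e^{\sigma t M/T}|$ (reduced to $t=T$ and handled by Lemma 2) plus $e^{\sigma W_T}\sup_t|e^{Ct-\sigma B_t}-1|$, then controls the bridge term via the bound $\sup_t(-B_t)\le M_T+|W_T|$, the reflection principle and Cauchy--Schwarz, and controls $\mathbb{E}_{\mathbb{Q}_{M-f(T),T}}[e^{\sigma W_T}]=O(1)$ by a strong Markov/freezing-lemma argument. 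You instead write the error multiplicatively as $e^{\sigma tM/T}(e^{R_t}-1)$, use $|e^r-1|\le|r|e^{|r|}$ to get the single pathwise bound $e^{\sigma M}\Lambda e^{\Lambda}$, treat the bridge part by exact scaling $S\overset{d}{=}\sqrt{T}\,\widetilde S$ together with the sub-Gaussian tail of the bridge supremum, and treat the endpoint part by a direct overshoot computation (complete the square, Mills ratio, Cauchy--Schwarz), which even yields $O(T)$ there and makes transparent that the $\sqrt{T}$ rate comes entirely from the bridge. This is self-contained and arguably cleaner than the paper's route (it bypasses the stopping-time bound for $\mathbb{E}_{\mathbb{Q}}[e^{\sigma W_T}]$ and the indicator splitting in Lemma 2), at the cost of redoing the Gaussian endpoint estimates from scratch. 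Two minor remarks: the phrase ``completing the square exactly as in the proof of Lemma 2'' is a mis-attribution, since the paper's Lemma 2 argues via a stopping time rather than by completing the square, but your computation stands on its own and is correct; and, as with the paper's own proof, your Mills-ratio steps need $M$ bounded away from $0$, which is all that is required in the application, where $M=G=\log B/\sigma>0$.
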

\begin{proof}Since $X$ is a geometric Brownian motion, it admits the explicit solution
$$
X_{t}=\exp \left(C t+\sigma W_{t}\right)
$$
where for convenience we have written $C:=\mu-\frac{\sigma^{2}}{2}$. Write $W_{t}=\frac{t}{T} W_{T}+B_{t}$, where
$$
B_{t}:=W_{t}-\frac{t}{T} W_{T}
$$
is a standard Brownian bridge, independent of $W_{T}$. We then have
$$
X_{t}=\exp \left(C t-\sigma B_{t}+\frac{\sigma t}{T} W_{T}\right)
$$

Let $D$ be the event $\left\{W_{T} \geq M-f(T)\right\}$. We compute
$$
\begin{aligned}
& \mathbb{E}_{\mathbb{Q}_{M-f(T), T}}\left[\sup _{0 \leq t \leq T}\left|X_{t}-e^{\frac{t}{T} \sigma M}\right|\right] \\
& \leq \mathbb{E}_{\mathbb{Q}_{M-f(T), T}}\left[\sup _{0 \leq t \leq T}\left|\exp \left(C t-\sigma B_{t}+\frac{\sigma t}{T} W_{T}\right)-e^{(\sigma t / T) W_{T}}\right|\right] \\
& +\mathbb{E}_{\mathbb{Q}_{M-f(T), T}}\left[\sup _{0 \leq t \leq T}\left|e^{(\sigma t / T) W_{T}}-e^{\frac{t}{T} \sigma M}\right|\right]
\end{aligned}
$$

By monotonicity, the supremum in the last term occurs at $t=T$, and hence the last term is of order $O(\sqrt{T})$ by Lemma 1. For the first term, we claim that
$$
\begin{aligned}
& \mathbb{E}_{\mathbb{Q}_{M-f(T), T}} {\left[\sup _{0 \leq t \leq T}\left|\exp \left(C t-\sigma B_{t}+\frac{\sigma t}{T} W_{T}\right)-e^{(\sigma t / T) W_{T}}\right|\right] } \\
& \leq \mathbb{E}_{\mathbb{Q}_{M-f(T), T}}\left[\sup _{0 \leq t \leq T}\left|\exp \left(C t-\sigma B_{t}\right)-1\right| e^{\sigma W_{T}}\right] 
\end{aligned}
$$

Indeed, we have trivially
$$
\begin{aligned}
& \left.\sup _{0 \leq t \leq T}\left|\exp \left(C t-\sigma B_{t}+\frac{\sigma t}{T} W_{T}\right)-e^{(\sigma t / T) W_{T}}\right|\right] \\
& \left.\leq \sup _{0 \leq t \leq T} \sup _{0 \leq r \leq T}\left|\exp \left(C t-\sigma B_{t}+\frac{\sigma r}{T} W_{T}\right)-e^{(\sigma r / T) W_{T}}\right|\right] \\
& \left.=\sup _{0 \leq t \leq T} \sup _{0 \leq r \leq T} e^{(\sigma r / T) W_{T}}\left|\exp \left(C t-\sigma B_{t}\right)-1\right|\right]
\end{aligned}
$$

Since $\sigma>0$, and $W_{T}>0$ on the event $D$, we have that for all $t$, the inner supremum is attained at $r=T$, whence Equation 2 follows.

Next, using the independence of $B_{t}$ from $W_{T}$ (for all $t \in 0 \leq t \leq T$ ), denoting by $\mathcal{W}$ the $\sigma$-algebra generated by $W_{T}$, we have
$$
\begin{aligned}
& \mathbb{E}_{\mathbb{Q}_{M-f(T), T}}\left[\sup _{0 \leq t \leq T}\left|\exp \left(C t-\sigma B_{t}+\frac{\sigma t}{T} W_{T}\right)-e^{(\sigma t / T) W_{T}}\right|\right] \\
& \leq \mathbb{E}_{\mathbb{Q}_{M-f(T), T}}\left[\sup _{0 \leq t \leq T} e^{\sigma W_{T}}\left|\exp \left(C t-\sigma B_{t}\right)-1\right|\right] \\
& \leq \frac{\mathbb{E}\left[\mathbf{1}_{D} e^{\sigma W_{T}} \sup _{0 \leq t \leq T}\left|\exp \left(C t-\sigma B_{t}\right)-1\right|\right]}{\mathbb{P}(D)} \\
& =\frac{\mathbb{E}\left[\mathbb{E}\left[\mathbf{1}_{D} e^{\sigma W_{T}} \sup _{0 \leq t \leq T}\left|\exp \left(C t-\sigma B_{t}\right)-1\right| \mid \mathcal{W}\right]\right]}{\mathbb{P}(D)} \\
& =\frac{\mathbb{E}\left[\mathbf{1}_{D} e^{\sigma W_{T}} \mathbb{E}\left[\sup _{0 \leq t \leq T}\left|\exp \left(C t-\sigma B_{t}\right)-1\right| \mid \mathcal{W}\right]\right]}{\mathbb{P}(D)} \\
& =\frac{\mathbb{E}\left[\sup _{0 \leq t \leq T}\left|\exp \left(C t-\sigma B_{t}\right)-1\right|\right] \mathbb{E}\left[1_{D} e^{\left.\sigma W_{T}\right]}\right.}{\mathbb{P}(D)} \\
& =\mathbb{E}\left[\sup _{0 \leq t \leq T}\left|\exp \left(C t-\sigma B_{t}\right)-1\right|\right] \mathbb{E}_{\mathbb{Q}_{M-f(T), T}}\left[e^{\sigma W_{T}}\right]
\end{aligned}
$$

We now claim that $\mathbb{E}\left[\left|\sup _{0 \leq t \leq T} \exp \left(C t-\sigma B_{t}\right)-1\right|\right]$ is of order $O(\sqrt{T})$, while $\mathbb{E}_{\mathbb{Q}_{M-f(T), T}}\left[e^{\sigma W_{T}}\right]$ is of order $O(1)$, whence the result would follow. To see the first claim, note that we have
$$
\begin{aligned}
& \mathbb{E}\left[\left|\sup _{0 \leq t \leq T} \exp \left(C t-\sigma B_{t}\right)-1\right|\right] \\
& \leq \mathbb{E}\left[\mid \sup _{0 \leq t \leq T} \exp \left(C t-\sigma B_{t}\right)-\exp \left(-\sigma B_{t} \mid\right)\right]+\mathbb{E}\left[\left|\sup _{0 \leq t \leq T} \exp \left(-\sigma B_{t}\right)-1\right|\right] \\
& \leq\left(e^{C T}-1\right) \mathbb{E}\left[\mid \sup _{0 \leq t \leq T} \exp \left(-\sigma B_{t}\right)\right]+\mathbb{E}\left[\left|\sup _{0 \leq t \leq T} \exp \left(-\sigma B_{t}\right)-1\right|\right] .
\end{aligned}
$$

Since the former term tends to 0 as $T \rightarrow 0$, it will thus suffice to show that
$$
\mathbb{E}\left[\mid \sup _{0 \leq t \leq T} \exp \left(-\sigma B_{t}\right)-1\right]
$$
tends to 0 . We estimate

\begin{equation}
\mathbb{E}\left[\sup _{0 \leq t \leq T}\left|\exp \left(-\sigma B_{t}\right)-1\right|\right] \leq \mathbb{E}\left[\left|\exp \left(\sup _{0 \leq t \leq T}-\sigma B_{t}\right)-1\right|\right]+\mathbb{E}\left[\left|\exp \left(\inf _{0 \leq t \leq T}-\sigma B_{t}\right)-1\right|\right] .
\end{equation}

We show in turn that both terms in Equation (3) are of order $O(\sqrt{T})$. For the first term, we note that since
$$
B_{t}=W_{t}-\frac{t}{T} W_{T}
$$
we have
$$
0 \leq \sup _{t \in[0, T]}-B_{t} \leq M_{T}+\left|W_{T}\right|
$$
where
$$
M_{t}:=\sup _{t \in[0, T]}-W_{t} .
$$

So by the Cauchy-Schwartz inequality,
$$
\begin{aligned}
\mathbb{E}\left[\mid \sup _{0 \leq t \leq T} \exp \left(-\sigma B_{t}\right)-1\right] & =\mathbb{E}\left[\sup _{0 \leq t \leq T} \exp \left(-\sigma B_{t}\right)-1\right] \\
& \leq \sqrt{\mathbb{E}\left[\exp \left(2 \sigma M_{T}\right)\right]} \sqrt{\mathbb{E}\left[\exp \left(2 \sigma\left|W_{T}\right|\right)\right]}-1
\end{aligned}
$$

By the reflection principle, $M_{T}=\left|W_{T}\right|$ in law, so
$$
\mathbb{E}\left[\sup _{0 \leq t \leq T} \exp \left(-\sigma B_{t}\right)-1\right] \leq \mathbb{E}\left[\exp \left(2 \sigma\left|W_{T}\right|\right)\right]-1
$$

Letting $\Phi$ denote the CDF of a standard normal random variable, by standard formulae, we have
$$
\begin{aligned}
\mathbb{E}\left[\exp \left(2 \sigma\left|W_{T}\right|\right)\right] & =2 e^{2 T \sigma^{2}} \Phi(2 \sigma \sqrt{T}) \\
& =(1+O(\sqrt{T})) e^{2 T \sigma^{2}} \\
& =(1+O(\sqrt{T}))(1+O(T)) \\
& =1+O(\sqrt{T})
\end{aligned}
$$
whence
$$
\mathbb{E}\left[\left|\exp \left(\sup _{0 \leq t \leq T}-\sigma B_{t}\right)-1\right|\right]=O(\sqrt{T})
$$
as claimed.

Now we deal with the second term in Equation 3. Since $\inf _{0 \leq t \leq T}-B_{t} \leq 0$ almost surely, we have $\exp \left(\sigma \inf _{0 \leq t \leq T}-W_{t}\right) \leq 1$, so the second term is
$$
\mathbb{E}\left[1-\exp \left(\sigma \inf _{0 \leq t \leq T}-B_{t}\right)\right]
$$

Hence, it will suffice to show that
$$
\mathbb{E}\left[\exp \left(\sigma \inf _{0 \leq t \leq T}-B_{t}\right)\right] \rightarrow 1
$$

Again, since $B_{t}=W_{t}-\frac{t}{T} W_{T}$, we have
$$
m_{T}-\left|W_{T}\right| \leq \inf _{0 \leq t \leq T}-B_{t} \leq 0
$$
where $m_{T}:=\inf _{0 \leq t \leq T}-W_{t}$. So
$$
\begin{aligned}
E\left[\exp \left(\sigma \inf _{0 \leq t \leq T}-B_{t}\right)\right] & \geq E\left[\exp \left(\sigma\left(m_{T}-\left|W_{T}\right|\right)\right)\right] \\
& =\mathbb{E}\left[\frac{1}{\exp \left(\sigma\left(-m_{T}+\left|W_{T}\right|\right)\right)}\right] \\
& \geq \frac{1}{\mathbb{E}\left[\exp \left(\sigma\left(-m_{T}+\left|W_{T}\right|\right)\right)\right]}
\end{aligned}
$$
where in the last line we have applied Jensen's inequality. Applying the Cauchy Schwartz inequality, we have
$$
\frac{1}{\mathbb{E}\left[\exp \left(\sigma\left(-m_{T}+\left|W_{T}\right|\right)\right)\right]} \geq \frac{1}{\sqrt{\mathbb{E}\left[\exp \left(-2 \sigma m_{T}\right)\right]} \sqrt{\mathbb{E}\left[\exp \left(2 \sigma\left|W_{T}\right|\right)\right]}}
$$

By the reflection principle, $-m_{T}=\left|W_{T}\right|$ in distribution, so
$$
\frac{1}{\sqrt{\mathbb{E}\left[\exp \left(-2 \sigma m_{T}\right)\right]} \sqrt{\mathbb{E}\left[\exp \left(2 \sigma\left|W_{T}\right|\right)\right]}} \geq \frac{1}{\mathbb{E}\left[\exp \left(2 \sigma\left|W_{T}\right|\right)\right]}
$$

Consequently, we have
$$
\mathbb{E}\left[1-\exp \left(\sigma \inf _{0 \leq t \leq T}-B_{t}\right)\right] \leq 1-\frac{1}{\mathbb{E}\left[\exp \left(2 \sigma\left|W_{T}\right|\right)\right]}
$$

Since
$$
\mathbb{E}\left[\exp \left(2 \sigma\left|W_{T}\right|\right)\right]=1+O(\sqrt{T})
$$
as proven earlier, we deduce
$$
\mathbb{E}\left[1-\exp \left(\sigma \inf _{0 \leq t \leq T}-B_{t}\right)\right] \leq 1-\frac{1}{1+O(\sqrt{T})}=O(\sqrt{T})
$$
as claimed.

On the other hand, the second claim follows from a stopping time argument and standard estimates. Indeed, write
$$
\tau_{T}:=\inf \left\{t>0 \mid W_{T} \geq M-f(T)\right\}
$$

We have
$$
\begin{aligned}
\mathbb{E}_{\mathbb{Q}_{M-f(T), T}}\left[e^{\sigma W_{T}}\right] & =\frac{\mathbb{E}\left[1_{D} e^{\sigma W_{T}}\right]}{\mathbb{P}(D)} \\
& =\frac{\mathbb{E}\left[\mathbb{E}\left[\left[1_{D} e^{\sigma W_{T}} \mid \mathcal{F}_{\tau}\right]\right]\right.}{\mathbb{P}(D)} \\
& =\frac{\mathbb{E}\left[1_{D} \mathbb{E}\left[\left[e^{\sigma W_{T}} \mid \mathcal{F}_{\tau}\right]\right]\right.}{\mathbb{P}(D)} .
\end{aligned}
$$

On $D$, we have $\tau_{T} \leq T$ almost surely. Thus by the strong Markov property, conditional on $\mathcal{F}_{\tau}, R_{t}:=W_{t+\tau}$ is a Brownian motion with initial value $R_{0}=W_{\tau}=M-f(T)$. Thus
$$
\begin{aligned}
\mathbb{E}\left[\left[e^{\sigma W_{T}} \mid \mathcal{F}_{\tau}\right]\right. & =\mathbb{E}\left[\left[e^{\sigma R_{t-\tau}} \mid \mathcal{F}_{\tau}\right]\right] \\
& =\left.\mathbb{E}\left[e^{\sigma R_{t-r}}\right]\right|_{r=\tau}
\end{aligned}
$$
where in the last equality we have applied the freezing lemma. We recognise $e^{\sigma R_{t-r}}$ as a log normal random variable with mean $\exp \left(M-f(T)+\frac{t-r}{2}\right) \leq \exp \left(M+|f(T)|+\frac{T}{2}\right)<\exp (M+1):=C$ for all small enough $T$, uniformly over all $0 \leq r \leq t$. Thus
$$
\begin{aligned}
\mathbb{E}_{\mathbb{Q}_{M-f(T), T}}\left[e^{\sigma W_{T}}\right] & =\frac{\mathbb{E}\left[\left.1_{D} \mathbb{E}\left[e^{\sigma R_{t-r}}\right]\right|_{r=\tau}\right]}{\mathbb{P}(D)} \\
& \leq C \frac{\mathbb{E}\left[1_{D}\right]}{\mathbb{P}(D)} \\
& =C
\end{aligned}
$$

Thus $\mathbb{E}_{\mathbb{Q}_{M-f(T), T}}\left[e^{\sigma W_{T}}\right]$ is of order $O(1)$ as claimed, and this concludes the proof.
\end{proof}
In the next lemma, we derive a crucial bound on the hitting time of the geometric Brownian motion $X$ at the given level $B$.

\begin{lemma}[Hitting time bounds] Let $\tau=\inf \left\{t>0 \mid X_{t}=B\right\}$. Then we have
$$
\mathbb{P}\left(\tau \geq\left(1-T^{1 / 2}\right) T \mid \tau \leq T\right) \rightarrow 1
$$
as $T \rightarrow 0^{+}$.
\end{lemma}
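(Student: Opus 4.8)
\textit{Proof sketch.} The plan is to reduce the statement to Gaussian tail estimates for the driving Brownian motion $W$, and to show that forcing $X$ to reach $B$ before time $(1-T^{1/2})T$ is exponentially more expensive than merely reaching it before time $T$, the gap being a factor of order $\exp(-c/\sqrt{T})$ which kills the conditional probability of the complementary event $\{\tau<(1-T^{1/2})T\}$.

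First I would rewrite the hitting condition in terms of $W$. Since $X_t=\exp(Ct+\sigma W_t)$ with $C:=\mu-\frac{\sigma^2}{2}$, and $X$ is continuous with $X_0=1<B$, the events $\{\tau\le s\}$ and $\{\sup_{0\le t\le s}(Ct+\sigma W_t)\ge\log B\}$ coincide for every $s\in(0,T]$. Using the crude bound $|Ct|\le|C|T$ on $[0,T]$, I would sandwich these events between events depending only on $\sup_{[0,s]}W_t$: one has $\{\sup_{[0,s]}W_t\ge(\log B+|C|T)/\sigma\}\subseteq\{\tau\le s\}\subseteq\{\sup_{[0,s]}W_t\ge(\log B-|C|T)/\sigma\}$. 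By the reflection principle, already invoked in Lemma 3, $\sup_{[0,s]}W_t$ has the law of $\sqrt{s}\,|Z|$ for a standard normal $Z$, so both sides of the sandwich become Gaussian tails of the form $2\,\mathbb{P}(Z\ge x)$.

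Next I would apply the estimate $\mathbb{P}(Z\ge x)=(1+O(x^{-2}))\phi(x)/x$ quoted earlier in the paper, together with the elementary two–sided bounds $\frac{x}{1+x^2}\phi(x)\le\mathbb{P}(Z\ge x)\le\frac{\phi(x)}{x}$. Applied to the lower sandwich with $s=T$, this yields $\mathbb{P}(\tau\le T)\ge c_1\sqrt{T}\exp\!\big(-\frac{(\log B)^2}{2\sigma^2 T}\big)$ for all small $T$, the point being that $(\log B\pm|C|T)^2/(2\sigma^2 T)=(\log B)^2/(2\sigma^2 T)+O(1)$, so the drift perturbs only a bounded multiplicative constant. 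Applied to the upper sandwich with $s=(1-T^{1/2})T=T-T^{3/2}$, dividing by $s$ rather than $T$ multiplies the exponent by $(1-T^{1/2})^{-1}=1+T^{1/2}+O(T)$, producing an extra factor $\exp\!\big(-\frac{(\log B)^2}{2\sigma^2\sqrt{T}}\big)$; hence $\mathbb{P}(\tau\le T-T^{3/2})\le c_2\sqrt{T}\exp\!\big(-\frac{(\log B)^2}{2\sigma^2 T}\big)\exp\!\big(-\frac{(\log B)^2}{2\sigma^2\sqrt{T}}\big)$, with $c_1,c_2>0$ depending only on $\mu,\sigma,B$. The $\sqrt{T}$ prefactors cancel, so $\mathbb{P}(\tau\le T-T^{3/2})/\mathbb{P}(\tau\le T)\le(c_2/c_1)\exp\!\big(-\frac{(\log B)^2}{2\sigma^2\sqrt{T}}\big)\to0$. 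Since $\{\tau<T-T^{3/2}\}\cap\{\tau\le T\}=\{\tau<T-T^{3/2}\}$ for small $T$, we conclude $\mathbb{P}(\tau\ge(1-T^{1/2})T\mid\tau\le T)=1-\mathbb{P}(\tau<T-T^{3/2})/\mathbb{P}(\tau\le T)\to1$.

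I expect the only real obstacle to be the bookkeeping in the exponents: one must verify that the drift terms $\pm|C|T$ perturb the leading exponent $(\log B)^2/(2\sigma^2 T)$ only by an $O(1)$ amount (a harmless constant factor), whereas the time reparametrization from $T$ to $(1-T^{1/2})T$ genuinely shifts it by the unbounded quantity $(\log B)^2/(2\sigma^2\sqrt{T})$, and to check that the two $\sqrt{T}$ polynomial prefactors coming from the Gaussian tail asymptotics cancel exactly rather than leaving a residual power of $T$ that could spoil the limit. The remaining steps are routine.
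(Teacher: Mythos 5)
Your argument is correct, and it takes a genuinely different route from the paper. The paper works directly with the explicit first-passage density $f_{\tau}$ of the geometric Brownian motion: it writes the conditional probability as a ratio of integrals $A_{1}/(A_{1}+A_{2})$ of $f_{\tau}$ over $[(1-T^{1/2})T,T]$ and $[0,(1-T^{1/2})T]$, shows by differentiating that $f_{\tau}$ is increasing on $[0,T]$ for small $T$, and bounds each integral by endpoint values of the density, arriving at $A_{2}/A_{1}=O\bigl(T^{-1/2}\exp(-C_{1}/\sqrt{T})\bigr)$. You instead never touch the density: you convert $\{\tau\le s\}$ into a statement about $\sup_{[0,s]}(Ct+\sigma W_{t})$, absorb the drift into the barrier via the $\pm|C|T$ sandwich, invoke the reflection principle to reduce everything to Gaussian tails, and compare the two tails with Mills-ratio bounds; the decisive factor $\exp\bigl(-c/\sqrt{T}\bigr)$ coming from the time reparametrization $(1-T^{1/2})^{-1}=1+T^{1/2}+O(T)$ is the same phenomenon the paper captures through the density. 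Your version is more elementary (it avoids quoting and differentiating the GBM hitting-time density, a step in the paper that is computationally delicate) and in fact yields a slightly cleaner quantitative bound, $\mathbb{P}(\tau<(1-T^{1/2})T\mid\tau\le T)=O\bigl(\exp(-c/\sqrt{T})\bigr)$ with no polynomial prefactor, since the two $\sqrt{T}$ factors from the Mills-ratio bounds cancel. This matters downstream: the proof of Theorem 1 actually uses the quantitative rate extracted from this lemma's proof (not just the stated limit), and your bound serves that purpose equally well, being comfortably $O(\sqrt{T})$. The only points to spell out in a full write-up are the ones you already flag: positivity of the shifted barrier $(\log B-|C|T)/\sigma$ for small $T$, and the exponent bookkeeping showing the drift contributes only $O(1)$ to the exponent while the time change contributes the unbounded term $(\log B)^{2}/(2\sigma^{2}\sqrt{T})$.
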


\begin{proof} Using that the density $f_{\tau}$ of $\tau$ is given by
$$
f_{\tau}(t)=\frac{R \exp \left(\frac{-\left(R+\left(\frac{\sigma^{2}}{2}-\mu\right) t\right)^{2}}{2 t}\right)}{\sqrt{2 \pi t^{3}}}
$$
we may write
$$
\begin{aligned}
\mathbb{P}\left(\tau \geq\left(1-T^{1 / 2}\right) T \mid \tau \leq T\right) & =\frac{\int_{\left(1-T^{1 / 2}\right) T}^{T} f_{\tau} d t}{\int_{0}^{T} f_{\tau} d t} \\
& =\frac{\int_{\left(1-T^{1 / 2}\right) T}^{T} f_{\tau} d t}{\int_{\left(1-T^{1 / 2}\right) T}^{T} f_{\tau}+\int_{0}^{\left(1-T^{1 / 2}\right) T} f_{\tau} d t} \\
& =: \frac{A_{1}}{A_{1}+A_{2}} \\
& =\frac{1}{1+A_{2} / A_{1}}
\end{aligned}
$$
with $A_{1}:=\int_{\left(1-T^{1 / 2}\right) T}^{T} f_{\tau} d t$ and $A_{2}:=\int_{0}^{\left(1-T^{1 / 2}\right) T} f_{\tau} d t$. Hence it will suffice to show that $\lim _{T \rightarrow 0+} \frac{A_{2}}{A_{1}}=0$. Now we have
$$
\frac{d f_{\tau}}{d t}=\frac{R e^{\left.-\left(2 R+t\left(\sigma^{2}-2 \mu\right)\right)^{2} / 8 t\right)}\left(4 R^{2}-t\left(t\left(2 \mu-\sigma^{2}\right)^{2}\right)+12\right)}{8 \sqrt{2 \pi} t^{7 / 2}}
$$
which is positive on $[0, T]$ for all small enough $T>0$, so $f_{\tau}$ is increasing on this interval.

Thus we may estimate
$$
\begin{aligned}
A_{2} & \leq \int_{0}^{\left(1-T^{1 / 2}\right) T} \frac{R \exp \left(\frac{-\left(R+\left(\frac{\sigma^{2}}{2}-\mu\right)\left(\left(1-T^{1 / 2}\right) T\right)^{2}\right)}{2\left(1-T^{1 / 2}\right) T}\right)}{\sqrt{2 \pi\left(\left(1-T^{1 / 2}\right) T\right)^{3}}} d t \\
& \leq \frac{T R \exp \left(\frac{-\left(R+\left(\frac{\sigma^{2}}{2}-\mu\right)\left(\left(\left(1-T^{1 / 2}\right) T\right)^{2}\right)\right.}{2\left(1-T^{1 / 2}\right) T}\right)}{\sqrt{2 \pi\left(\left(1-T^{1 / 2}\right) T\right)^{3}}}
\end{aligned}
$$
where we have used the fact that $f_{\tau}$ is increasing on $[0, T]$ for small enough $T$. Similarly,
$$
\begin{aligned}
A_{1} & \geq \int_{\left(1-\frac{T^{1 / 2}}{2}\right) T}^{T} f_{\tau} d t \\
& \geq \int_{\left(1-\frac{T^{1 / 2}}{2}\right) T}^{T} \frac{R \exp \left(\frac{-\left(R+\left(\frac{\sigma^{2}}{2}-\mu\right)\left(1-\frac{T^{1 / 2}}{2}\right) T\right)^{2}}{\left.\left(2\left(1-\frac{T^{1 / 2}}{2}\right) T\right)\right)}\right)}{\sqrt{\left.2 \pi\left(2\left(1-\frac{T^{1 / 2}}{2}\right) T\right) / 3\right)^{3}}} d t \\
& =\left(\frac{T^{3 / 2}}{2}\right)\left(\frac{R \exp \left(\frac{-\left(R+\left(\frac{\sigma^{2}}{2}-\mu\right)\left(1-\frac{T^{1 / 2}}{2}\right) T\right)^{2}}{\left.\left(2\left(1-\frac{T^{1 / 2}}{2}\right) T\right)\right)}\right)}{\sqrt{\left.2 \pi\left(2\left(1-\frac{T^{1 / 2}}{2}\right) T\right) / 3\right)^{3}}}\right)
\end{aligned}
$$
so that, after dividing the above two equations we obtain
$$
\frac{A_{2}}{A_{1}} \leq T^{-1 / 2} C_{0} \exp \left(-\frac{C_{1}}{T^{1 / 2}}+C_{2}+C_{3} T\right)
$$
where $C_{0}, \ldots, C_{3}$ are constants with $C_{0}, C_{1}>0$ that do not depend on $T$. We use the simple estimate
$$
\begin{aligned}
\frac{A_{2}}{A_{1}} & \leq T^{-1 / 2} C_{0} \exp \left(-\frac{C_{1}}{T^{1 / 2}}+C_{2}+C_{3}\right) \\
& =C_{4} T^{-1 / 2} \exp \left(-\frac{C_{1}}{T^{1 / 2}}\right)
\end{aligned}
$$
for all $T<1$, say, which tends to 0 as $T \rightarrow 0^{+}$, as desired.
\end{proof}

\subsection{Proof of main theorem}
We are now ready to give the proof of Theorem 1.

\begin{proof}[Proof of Theorem 1] First we show that
\begin{equation}\label{zero}\mathbb{E}_{\mathbb{P}_{T}}\left [\sup _{0<t<T}\left |X_{t}-B^{\frac{t}{T}}\right |\right ] \rightarrow 0\end{equation}
and later refine our analysis to achieve the $O(\sqrt{T})$ convergence rate. To this end, let $Y_{T}$ be the event $\left\{W_{T} \geq\right.$ $\left.G-\left(\frac{\mu}{\sigma}-\frac{\sigma}{2}\right) T\right\}$. We recall that $G:=\frac{\log B}{\sigma}$ and $A_{T}$ is the event $\left\{\max _{0 \leq t \leq T} X_{t} \geq B\right\}$. Note that if $W_{T} \geq G-\left(\frac{\mu}{\sigma} T\right)$, then $X_{T}=\exp \left(\frac{\mu}{\sigma}-\frac{\sigma}{2}\right) T+\sigma W_{T} \geq B$, and thus $Y_{T}$ is a subset of $A_{T}$. We then have

$$\begin{aligned}
& \mathbb{E}_{\mathbb{P}_{T}}\left[\sup _{0<t<T}\left|X_{t}-B^{\frac{t}{T}}\right|\right] \\
& =\mathbb{E}\left[\left.\sup _{0<t<T}\left|X_{t}-B^{\frac{t}{T}}\right| \right\rvert\, A_{T}\right] \\
& =\mathbb{E}\left[\left.\mathbf{1}_{Y_{T}} \sup _{0<t<T}\left|X_{t}-B^{\frac{t}{T}}\right| \right\rvert\, A_{T}\right]+\mathbb{E}\left[\left.\mathbf{1}_{Y_{T}^{c}} \sup _{0<t<T}\left|X_{t}-B^{\frac{t}{T}}\right| \right\rvert\, A_{T}\right] \\
& =\frac{\mathbb{E}\left[\mathbf{1}_{Y_{T}} \mathbf{1}_{A_{T}} \sup _{0<t<T}\left|X_{t}-B^{\frac{t}{T}}\right|\right]}{\mathbb{P}\left(A_{T}\right)}+\frac{\mathbb{E}\left[\mathbf{1}_{Y_{T}^{c}} \mathbf{1}_{A_{T}} \sup _{0<t<T}\left|X_{t}-B^{\frac{t}{T}}\right|\right]}{\mathbb{P}\left(A_{T}\right)} \\
& =\left(\frac{\mathbb{E}\left[\mathbf{1}_{Y_{T}} \mathbf{1}_{A_{T}} \sup _{0<t<T}\left|X_{t}-B^{\frac{t}{T}}\right|\right]}{\mathbb{P}\left(Y_{T}\right)}\right)\left(\frac{\mathbb{P}\left(Y_{T}\right)}{\mathbb{P}\left(A_{T}\right)}\right)+\frac{\mathbb{E}\left[\mathbf{1}_{Y_{T}^{c}} \mathbf{1}_{A_{T}} \sup _{0<t<T}\left|X_{t}-B^{\frac{t}{T}}\right|\right]}{\mathbb{P}\left(A_{T}\right)} \\
& \leq \mathbb{E}\left [\sup _{0<t<T}\left|X_{t}-B^{\frac{t}{T}}\right | \big | \, Y_{T}\right ]+\frac{\mathbb E \left[\mathbf{1}_{Y_{T}^{c}} \mathbf{1}_{A_{T}} \sup_{0<t<T}\left|X_{t}-B^{\frac{t}{T}}\right|\right]}{\mathbb{P}\left(A_{T}\right)} \\
& = \mathbb E_{\mathbb{Q}_{G-(\frac{\mu}{\sigma}-\frac{\sigma}{2})T, T}} \left[\sup _{0<t<T}\left|X_{t}-B^{\frac{t}{T}}\right|\right]+\frac{\mathbb{E}\left[\mathbf{1}_{Y_{T}^{c}} \mathbf{1}_{A_{T}} \sup _{0<t<T}\left|X_{t}-B^{\frac{t}{T}}\right|\right]}{\mathbb{P}\left(A_{T}\right)}
\end{aligned}$$

where in the last two lines we have applied the fact that $Y_{T}$ is a subset of $A_{T}$, hence $\mathbf{1}_{Y_{T}} \mathbf{1}_{A_{T}}=\mathbf{1}_{Y_{T}}$ and $\frac{\mathbb{P}\left(Y_{T}\right)}{\mathbb{P}\left(A_{T}\right)} \leq 1$. We now examine the second term. Writing $\tau:=\inf \left\{t>0 \mid X_{\tau}=B\right\}$, we have

\begin{equation}\label{lol}\begin{aligned}
& \frac{\mathbb{E}\left[\mathbf{1}_{Y_{T}^{c}} \mathbf{1}_{A_{T}} \sup _{0<t<T}\left|X_{t}-B^{\frac{t}{T}}\right|\right]}{\mathbb{P}\left(A_{T}\right)} \\
& \leq \frac{\mathbb{E}\left[\mathbf{1}_{Y_{T}^{c}} \mathbf{1}_{A_{T}} \sup _{0<t<\tau}\left|X_{t}-B^{\frac{t}{T}}\right|\right]}{\mathbb{P}\left(A_{T}\right)}+\frac{\mathbb{E}\left[\mathbf{1}_{Y_{T}^{c}} \mathbf{1}_{A_{T}} \sup _{\tau \leq t<T}\left|X_{t}-B^{\frac{t}{T}}\right|\right]}{\mathbb{P}\left(A_{T}\right)} \\
& =\frac{\mathbb{E}\left[\mathbb{E}\left[\left.\mathbf{1}_{Y_{T}^{c}} \mathbf{1}_{A_{T}} \sup _{0<t<\tau}\left|X_{t}-B^{\frac{t}{T}}\right| \right\rvert\, \mathcal{F}_{\tau}\right]\right]}{\mathbb{P}\left(A_{T}\right)}+\frac{\mathbb{E}\left[\mathbb{E}\left[\left.\mathbf{1}_{Y_{T}^{c}} \mathbf{1}_{A_{T}} \sup _{\tau \leq t<T}\left|X_{t}-B^{\frac{t}{T}}\right| \right\rvert\, \mathcal{F}_{\tau}\right]\right]}{\mathbb{P}\left(A_{T}\right)} \\
& =\frac{\mathbb{E}\left[\mathbb{E}\left[\mathbf{1}_{Y_{T}^{c}} \mid \mathcal{F}_{\tau}\right] \mathbf{1}_{A_{T}} \sup _{0<t<\tau}\left|X_{t}-B^{\frac{t}{T}}\right|\right]}{\mathbb{P}\left(A_{T}\right)}+\frac{\mathbb{E}\left[\mathbb{E}\left[\left.\mathbf{1}_{Y_{T}^{c}} \sup _{\tau \leq t<T}\left|X_{t}-B^{\frac{t}{T}}\right| \right\rvert\, \mathcal{F}_{\tau}\right] \mathbf{1}_{A_{T}}\right]}{\mathbb{P}\left(A_{T}\right)} \\
& \leq \frac{\mathbb{E}\left[\mathbb{E}\left[\mathbf{1}_{Y_{T}^{c}} \mid \mathcal{F}_{\tau}\right] \mathbf{1}_{A_{T}} \sup _{0<t<\tau}\left|X_{t}-B^{\frac{t}{T}}\right|\right]}{\mathbb{P}\left(A_{T}\right)}+\frac{\mathbb{E}\left[\mathbb{E}\left[\left.\sup _{\tau \leq t<T}\left|X_{t}-B^{\frac{t}{T}}\right| \right\rvert\, \mathcal{F}_{\tau}\right] \mathbf{1}_{A_{T}}\right]}{\mathbb{P}\left(A_{T}\right)} \\
& =\frac{\mathbb{E}\left[\mathbb{E}\left[\mathbf{1}_{Y_{T}^{c}} \mid \mathcal{F}_{\tau}\right] \mathbf{1}_{A_{T}} \sup _{0<t<\tau}\left|X_{t}-B^{\frac{t}{T}}\right|\right]}{\mathbb{P}\left(A_{T}\right)}+\mathbb{E}\left[\left.\sup _{\tau \leq t<T}\left|X_{t}-B^{\frac{t}{T}}\right| \right\rvert\, \mathcal{A}_{T}\right]
\end{aligned}\end{equation}

where we have performed an intermediate conditioning on $\mathcal{F}_{\tau}$, and made use of the $\left\{\mathcal{F}_{t}\right\}$-adaptedness of $X$ to bring terms outside the conditional expectation.

We now make two claims - the first is that
$$\mathbb{E}\left[\mathbf{1}_{Y_{T}^{c}} \mid \mathcal{F}_{\tau}\right]$$

is almost surely bounded away from 1 as $T \rightarrow 0$ - that is, there exists some $0<C<1$ and $T_{0}>0$ such that
\begin{equation}\label{claim1}
\mathbb{E}\left[\mathbf{1}_{Y_{T}^{c}} \mid \mathcal{F}_{\tau}\right] \leq C 
\end{equation}
almost surely whenever $T<T_{0}$. The second is that
\begin{equation}\label{claim2}
\mathbb{E}\left[\sup _{\tau<t<T}\left |X_{t}-B^{\frac{t}{T}}\right | \right ] \to 0 \end{equation}
as $T \rightarrow 0^{+}$. Admitting for now these two claims, letting $\varepsilon>0$ be arbitrary, we have
$$\mathbb{E}_{\mathbb{P}_{T}}\left[\sup _{0<t<T}\left|X_{t}-B^{\frac{t}{T}}\right|\right]$$

$$
\begin{aligned}
\leq & \mathbb E_{\mathbb{Q}_{G-(\frac{\mu}{\sigma}-\frac{\sigma}{2})T, T}}\left[\sup _{0<t<T}\left|X_{t}-B^{\frac{t}{T}}\right|\right]+\frac{\mathbb{E}\left[\mathbb{E}\left[\mathbf{1}_{Y_{T}^{c}} \mid \mathcal{F}_{\tau}\right] \mathbf{1}_{A_{T}} \sup _{0<t<\tau}\left|X_{t}-B^{\frac{t}{T}}\right|\right]}{\mathbb{P}\left(A_{T}\right)} \\
& +\mathbb{E}\left[\left.\sup _{\tau<t<T}\left|X_{t}-B^{\frac{t}{T}}\right| \right\rvert\, \mathcal{A}_{T}\right] \\
\leq & \mathbb E_{\mathbb{Q}_{G-(\frac{\mu}{\sigma}-\frac{\sigma}{2})T, T}}\left[\sup _{0<t<T}\left|X_{t}-B^{\frac{t}{T}}\right|\right]+\frac{\mathbb{E}\left[C \mathbf{1}_{A_{T}} \sup _{0<t<T}\left|X_{t}-B^{\frac{t}{T}}\right|\right]}{\mathbb{P}\left(A_{T}\right)} \\
& +\mathbb{E}\left[\left.\sup _{\tau<t<T}\left|X_{t}-B^{\frac{t}{T}}\right| \right\rvert\, \mathcal{A}_{T}\right] \\
= & \mathbb E_{\mathbb{Q}_{G-(\frac{\mu}{\sigma}-\frac{\sigma}{2})T, T}}[\sup_{0<t<T} |X_{t}-B^{\frac{t}{T}}|]+C \mathbb{E}_{\mathbb{P}_{T}} [\sup_{0<t<T} |X_t-B^{\frac{t}{T}}|]+\varepsilon \\
= & C \mathbb{E}_{\mathbb{P}_{T}}\left[\sup _{0<t<T}\left|X_{t}-B^{\frac{t}{T}}\right|\right]+2 \varepsilon
\end{aligned}
$$

for all small enough $T$, where in the third to last line we have applied (\ref{claim1}), in the second to last line we have applied (\ref{claim2}), and in the last line we have applied Lemma 4. Thus
$$
(1-C) \mathbb{E}_{\mathbb{P}_{T}}\left[\sup _{0<t<T}\left|X_{t}-B^{\frac{t}{T}}\right|\right] \leq 2 \varepsilon
$$
which implies
$$
\mathbb{E}_{\mathbb{P}_{T}}\left[\sup _{0<t<T}\left|X_{t}-B^{\frac{t}{T}}\right|\right] \leq \frac{2 \varepsilon}{1-C}
$$

Since $\varepsilon$ was arbitrary, we conclude
$$
\mathbb{E}_{\mathbb{P}_{T}}\left[\sup _{0<t<T}\left|X_{t}-B^{\frac{t}{T}}\right|\right]
$$
tends to 0 as $T \rightarrow 0^{+}$as required.

It remains only to prove the earlier two claims (\ref{claim1}) and (\ref{claim2}). For the first claim, we note that
$$
\mathbb{E}\left[\mathbf{1}_{Y_{T}^{c}} \mid F_{\tau}\right]=1-\mathbb{E}\left[\mathbf{1}_{Y_{T}} \mid F_{\tau}\right]
$$

Hence it will suffice to show that there is some $C>0$ such that $\mathbb{E}\left[\mathbf{1}_{Y_{T}} \mid F_{\tau}\right]>C$ almost surely for all small enough $T$. To this end, we estimate
$$
\begin{aligned}
\mathbb{E}\left[\mathbf{1}_{Y_{T}} \mid F_{\tau}\right] & =\mathbb{P}\left(\left.W_{T} \geq G-\left(\frac{\mu}{\sigma}-\frac{\sigma}{2} T\right) \right\rvert\, \mathcal{F}_{\tau}\right) \\
& =\mathbb{P}\left(\left.W_{\tau}+W_{T}-W \tau \geq G-\left(\frac{\mu}{\sigma}-\frac{\sigma}{2}\right) T \right\rvert\, \mathcal{F}_{\tau}\right) \\
& =\mathbb{P}\left(\left.W_{T}-W \tau \geq\left(\frac{\sigma}{2}-\frac{\mu}{\sigma}\right)(T-\tau) \right\rvert\, \mathcal{F}_{\tau}\right) .
\end{aligned}
$$

Recalling that $W_{T}-W_{\tau}$ is a normal random variable with variance $T-\tau$, we have
$$
\mathbb{P}\left(\left.W_{T}-W \tau \geq\left(\frac{\sigma}{2}-\frac{\mu}{\sigma}\right)(T-\tau) \right\rvert\, \mathcal{F}_{\tau}\right)=\mathbb{P}\left(Z \geq\left(\frac{\sigma}{2}-\frac{\mu}{\sigma}\right) \sqrt{T-\tau}\right)
$$
where $Z$ is a standard normal random variable. The above tends to $\mathbb{P}(Z \geq 0)$ as $T \rightarrow 0$, uniformly in $\omega$, and so any $0<C<\frac{1}{2}$ will satisfy the required inequality, say $C=\frac{1}{3}$. This proves (\ref{claim1}). For the second claim (\ref{claim2}), we estimate, for any $\delta>0$,
$$
\begin{aligned}
& \mathbb{E}\left[\left.\sup _{\tau<t<T}\left|X_{t}-B^{\frac{t}{T}}\right| \right\rvert\, \mathcal{A}_{T}\right] \\
& =\mathbb{E}\left[\left.\mathbf{1}_{\tau<(1-\delta) T} \sup _{\tau<t<T}\left|X_{t}-B^{\frac{t}{T}}\right| \right\rvert\, \mathcal{A}_{T}\right]+\mathbb{E}\left[\left.\mathbf{1}_{\tau \geq(1-\delta) T} \sup _{\tau \leq t<T}\left|X_{t}-B^{\frac{t}{T}}\right| \right\rvert\, \mathcal{A}_{T}\right]
\end{aligned}
$$

The first term above is equal to
$$
\begin{aligned}
& \frac{\mathbb{E}\left[\mathbf{1}_{A_{T}} \mathbf{1}_{\{\tau<(1-\delta) T\}} \sup _{\tau \leq t<T}\left|X_{t}-B^{\frac{t}{T}}\right|\right]}{\mathbb{P}\left(A_{T}\right)} \\
& =\frac{\mathbb{E}\left[\mathbb{E}\left[\mathbf{1}_{A_{T}} \mathbf{1}_{\{\tau<(1-\delta) T\}} \sup _{\tau \leq t<T} \left\lvert\, X_{t}-B^{\frac{t}{T}}\right. \| \mathcal{F}_{\tau}\right]\right]}{\mathbb{P}\left(A_{T}\right)} \\
& =\frac{\mathbb{E}\left[\mathbf{1}_{A_{T}} \mathbf{1}_{\{\tau<(1-\delta) T\}} \mathbb{E}\left[\sup _{\tau \leq t<T} \left\lvert\, X_{t}-B^{\frac{t}{T}}\right. \| \mathcal{F}_{\tau}\right]\right]}{\mathbb{P}\left(A_{T}\right)} .
\end{aligned}
$$

Applying the strong Markov property and the freezing lemma, we have
$$
\mathbb{E}\left[\left.\sup _{\tau<t<T}\left|X_{t}-B^{\frac{t}{T}}\right| \right\rvert\, \mathcal{F}_{\tau}\right]=\left.\mathbb{E}\left[\sup _{0 \leq s<T-r}\left|R_{s}-B^{\frac{r+s}{T}}\right|\right]\right|_{r=\tau},
$$
where $R_{s}:=X_{\tau+s}$ is a geometric Brownian motion independent of $\mathcal{F}_{\tau}$. Hence
$$
\begin{aligned}
& \frac{\mathbb{E}\left[\mathbf{1}_{A_{T}} \mathbf{1}_{\{\tau<(1-\delta) T\}} \sup _{\tau \leq t<T}\left|X_{t}-B^{\frac{t}{T}}\right|\right]}{\mathbb{P}\left(A_{T}\right)} \\
& =\frac{\mathbb{E}\left[\left.\mathbf{1}_{A_{T}} \mathbf{1}_{\{\tau<(1-\delta) T\}} \mathbb{E}\left[\sup _{0 \leq s<T-r}\left|R_{s}-B^{\frac{r+s}{T}}\right|\right]\right|_{r=\tau}\right]}{\mathbb{P}\left(A_{T}\right)} \\
& \leq \frac{\mathbb{E}\left[\left.\mathbf{1}_{A_{T}} \mathbf{1}_{\{\tau<(1-\delta) T\}} \mathbb{E}\left[\sup _{0 \leq s<T-r}\left|R_{s}+B^{\frac{r+s}{T}}\right|\right]\right|_{r=\tau}\right]}{\mathbb{P}\left(A_{T}\right)}
\end{aligned}
$$

We note that
$$
\begin{aligned}
\left.\mathbb{E}\left[\sup _{0 \leq s<T-r}\left|R_{s}+B^{\frac{r+s}{T}}\right|\right]\right|_{r=\tau} & \leq \mathbb{E}\left[\sup _{0 \leq s \leq T}\left|R_{s}+B\right|\right] \\
& \leq \mathbb{E}\left[\sup _{0 \leq s \leq 1}\left|R_{s}+B\right|\right]
\end{aligned}
$$
for all small enough $T$. Since $\sup _{0 \leq s \leq 1} R_{s}$ is an $L^{1}$ random variable, we deduce that for all small enough $T$, $\left.\mathbb{E}\left[\sup _{0 \leq s<T-r}\left|R_{s}+B^{\frac{r+s}{T}}\right|\right]\right|_{r=\tau}$ is almost surely bounded above by some $C$ depending not on $T$ or $\tau$. Thus,
$$
\begin{aligned}
\frac{\mathbb{E}\left[\mathbf{1}_{A_{T}} \mathbf{1}_{\{\tau<(1-\delta) T\}} \sup _{\tau \leq t<T}\left|X_{t}-B^{\frac{t}{T}}\right|\right]}{\mathbb{P}\left(A_{T}\right)} & =O(1) \mathbb{E}\left[\mathbf{1}_{\{\tau<(1-\delta) T\}} \mid A_{T}\right] \\
& =O(1) \mathbb{P}[\tau<(1-\delta) T \mid \tau \leq T] \\
& \rightarrow 0
\end{aligned}
$$
as $T \rightarrow 0$ by Lemma 5.

On the other hand, we estimate
\begin{equation}\label{lol2}\begin{aligned}
& \mathbb{E}\left[\left.\mathbf{1}_{\{\tau \geq(1-\delta)\}} \sup _{\tau<t<T}\left|X_{t}-B^{\frac{t}{T}}\right| \right\rvert\, A_{T}\right] \\
& =\mathbb{E}\left[\left.\mathbf{1}_{\{\tau \geq(1-\delta) T\}} \sup _{\tau<t<T}\left|X_{t}-B^{\frac{t}{T}}\right| \right\rvert\, A_{T}\right] \\
& =\mathbb{E}\left[\left.\mathbf{1}_{\{\tau \geq(1-\delta) T\}} \sup _{\tau<t<T}\left|X_{\tau}+X_{T}-X_{\tau}-B^{\frac{t}{T}}\right| \right\rvert\, A_{T}\right] \\
& =\mathbb{E}\left[\left.\mathbf{1}_{\{\tau \geq(1-\delta) T\}} \sup _{\tau<t<T}\left|X_{T}-X_{\tau}+B-B^{\frac{t}{T}}\right| \right\rvert\, A_{T}\right] \\
& \leq \mathbb{E}\left[\mathbf{1}_{\{\tau \geq(1-\delta) T\}} \sup _{\tau<t<T}\left|X_{T}-X_{\tau}\right| \mid A_{T}\right]+\mathbb{E}\left[\left.\mathbf{1}_{\{\tau \geq(1-\delta) T\}} \sup _{\tau<t<T}\left|B-B^{\frac{t}{T}}\right| \right\rvert\, \mathcal{A}_{T}\right] . \\
& =\mathbb{E}\left[\mathbf{1}_{\{\tau \geq(1-\delta) T\}} \sup _{\tau<t<T}\left|X_{T}-X_{\tau}\right| \mid A_{T}\right]+\mathbb{E}\left[\left.\mathbf{1}_{\{\tau \geq(1-\delta) T\}}\left|B-B^{\frac{\tau}{T}}\right| \right\rvert\, A_{T}\right] . \\
& \leq \frac{\mathbb{E}\left[\mathbf{1}_{\{(1-\delta) T \leq \tau \leq T\}} \sup _{\tau<t<T}\left|X_{T}-X_{\tau}\right|\right]}{\mathbb{P}\left(A_{T}\right)}+\frac{\mathbb{E}\left[\mathbf{1}_{\{(1-\delta) T \leq \tau \leq T\}}\left|B-B^{1-\delta}\right|\right]}{\mathbb{P}\left(A_{T}\right)} \\
& =\frac{\mathbb{E}\left[\mathbf{1}_{\{(1-\delta) T \leq \tau \leq T\}} \sup _{\tau<t<T}\left|X_{T}-X_{\tau}\right|\right]}{\mathbb{P}\left(A_{T}\right)}+\left|B-B^{1-\delta}\right| \frac{\mathbb{E}\left[\mathbf{1}_{\{(1-\delta) T \leq \tau \leq T\}}\right]}{\mathbb{P}\left(A_{T}\right)} . \\
& \leq \frac{\mathbb{E}\left[\mathbf{1}_{\{(1-\delta) T \leq \tau \leq T\}} \sup _{\tau<t<T}\left|X_{T}-X_{\tau}\right|\right]}{\mathbb{P}\left(A_{T}\right)}+\left|B-B^{1-\delta}\right| .
\end{aligned}\end{equation}

To estimate the first term above, we write $R_{t}:=X_{\tau+t}$ and note that by the strong Markov property of SDEs, $R_{t}$ is a geometric Brownian motion independent of $\mathcal{F}_{\tau}$ with the same parameters $\mu, \sigma$ as $X$ and initial condition $R_{0}=B$. Noting also that $X_{\tau}=B$, the first term reads
$$
\begin{aligned}
& \frac{\mathbb{E}\left[\mathbf{1}_{\{(1-\delta) T \leq \tau \leq T\}} \mathbb{E}\left[\sup _{0 \leq t \leq T-\tau}\left|R_{t}-B\right|\right]\right]}{\mathbb{P}\left(A_{T}\right)} \\
& \leq \frac{\mathbb{E}\left[\mathbf{1}_{\{(1-\delta) T \leq \tau \leq T\}} \mathbb{E}\left[\sup _{0 \leq t \leq \delta T}\left|R_{t}-B\right|\right]\right.}{\mathbb{P}\left(A_{T}\right)} \\
& \leq \mathbb{E}\left[\sup _{0 \leq t \leq \delta T}\left|R_{t}-B\right|\right]
\end{aligned}
$$
which tends to 0 as $T \rightarrow 0$ by standard estimates on SDE (see, for example \cite{baldi}, Theorem 9.1). Thus we have, for any $\delta>0$,
$$
\lim _{T \rightarrow 0^{+}} \mathbb{E}\left[\left.\sup _{\tau<t<T}\left|X_{t}-B^{\frac{t}{T}}\right| \right\rvert\, \mathcal{A}_{T}\right] \leq\left|B-B^{1-\delta}\right|
$$
which tends to 0 as $\delta \rightarrow 0$. Thus sending $\delta$ to 0 , we obtain the desired claim (\ref{claim2}). This completes the proof of (\ref{zero}).

Now we prove the $O(\sqrt{T})$ convergence rate. From (\ref{lol}) and (\ref{claim1}, we have
$$
(1-C) \mathbb{E}_{\mathbb{P}_{T}}\left[\sup _{0<t<T}\left|X_{t}-B^{\frac{t}{T}}\right|\right] \leq \mathbb{E}_{\mathbb{Q}_{G-\left(\frac{\mu}{\sigma}-\frac{\sigma}{2}\right)T, T}}\left[\sup _{0<t<T}\left|X_{t}-B^{\frac{t}{T}}\right|\right]+\mathbb{E}\left[\left.\sup _{\tau<t<T}\left|X_{t}-B^{\frac{t}{T}}\right| \right\rvert\, \mathcal{A}_{T}\right]
$$
for some fixed $0<C<\frac{1}{2}$. By Lemma 4, the first term on the right hand side above is of order $O(\sqrt{T})+$ $\left|\left(\frac{\mu}{\sigma}-\frac{\sigma}{2}\right) T\right|=O(\sqrt{T})$. Hence to prove the proposition, it will suffice to show that
$$
\mathbb{E}\left[\left.\sup _{\tau<t<T}\left|X_{t}-B^{\frac{t}{T}}\right| \right\rvert\, \mathcal{A}_{T}\right]=O(\sqrt{T}) .
$$

To this end, we write
$$
\begin{aligned}
\mathbb{E}\left[\left.\sup _{\tau<t<T}\left|X_{t}-B^{\frac{t}{T}}\right| \right\rvert\, \mathcal{A}_{T}\right]= & \mathbb{E}\left[\left.\mathbf{1}_{\left\{\tau<\left(1-T^{1 / 2}\right) T\right\}} \sup _{\tau<t<T}\left|X_{t}-B^{\frac{t}{T}}\right| \right\rvert\, \mathcal{A}_{T}\right] \\
& +\mathbb{E}\left[\left.\mathbf{1}_{\tau \geq\left(1-T^{1 / 2}\right) T} \sup _{\tau \leq t<T}\left|X_{t}-B^{\frac{t}{T}}\right| \right\rvert\, \mathcal{A}_{T}\right]
\end{aligned}
$$

Similarly as to the estimate of the first term in (\ref{lol2}), we deduce that
$$
\mathbb{E}\left[\left.\mathbf{1}_{\left\{\tau<\left(1-T^{1 / 2}\right) T\right\}} \sup _{\tau<t<T}\left|X_{t}-B^{\frac{t}{T}}\right| \right\rvert\, \mathcal{A}_{T}\right]=O(1) \mathbb{P}\left[\tau<\left(1-T^{1 / 2}\right) T \mid \tau \leq T\right]
$$

The proof of Lemma 5 shows that
$$
\mathbb{P}\left[\tau<\left(1-T^{1 / 2}\right) T \mid \tau \leq T\right]=O\left(T^{-1 / 2} \exp \left(-\frac{C_{1}}{T^{1 / 2}}\right)\right)
$$
which is certainly of order $O(\sqrt{T})$. Hence it is left to show that
$$
\mathbb{E}\left[\left.\mathbf{1}_{\tau \geq\left(1-T^{1 / 2}\right) T} \sup _{\tau \leq t<T}\left|X_{t}-B^{\frac{t}{T}}\right| \right\rvert\, \mathcal{A}_{T}\right]=O(\sqrt{T}) .
$$

But we may estimate
$$
\mathbb{E}\left[\left.\mathbf{1}_{\tau \geq\left(1-T^{1 / 2}\right) T} \sup _{\tau \leq t<T}\left|X_{t}-B^{\frac{t}{T}}\right| \right\rvert\, \mathcal{A}_{T}\right] \leq \mathbb{E}\left[\sup _{0 \leq t \leq T^{3 / 2}}\left|R_{t}-B\right|\right]+\left|B-B^{1-T^{1 / 2}}\right|
$$
where again $R_{t}:=X_{t+\tau}$. The first term above is of order $O\left(T^{3 / 4}\right)$ by standard estimates on solutions to SDE (see \cite{baldi}, Theorem 9.1), and hence a fortiori of order $O(\sqrt{T})$. On the other hand, we have
$$
\begin{aligned}
\left|B-B^{1-T^{1 / 2}}\right| & =B^{1-T^{1 / 2}}\left(B^{T^{1 / 2}}-1\right) \\
& \leq B\left(e^{T^{1 / 2} \ln B}-1\right) \\
& =B\left(1+(\ln B) T^{1 / 2}+o\left((\ln B) T^{1 / 2}\right)-1\right) \\
& =O(\sqrt{T})
\end{aligned}
$$
where we have applied a Taylor expansion in the second to last equality. Combining the two estimates above gives
$$
\mathbb{E}\left[\left.\mathbf{1}_{\tau \geq\left(1-T^{1 / 2}\right) T} \sup _{\tau \leq t<T}\left|X_{t}-B^{\frac{t}{T}}\right| \right\rvert\, \mathcal{A}_{T}\right]=O(\sqrt{T})
$$
which concludes the proof.
\end{proof}
\section{Pricing of Short Maturity Options}

We now apply Theorem 1 to derive an asymptotic expression for the price of a short maturity barrier option with general payoff. Suppose $X$ as defined in Section 1 is taken to be the model of a stock price process with initial price $S > 0$. For convenience, we restate the defining $\operatorname{SDE}$ for $X$ :
$$
d X_{t}=\mu X_{t} d t+\sigma X_{t} d W_{t}, \quad X_{0}= S \, \mathrm{a} . \mathrm{s}
$$

Consider an out of the money up-and-in Asian option written on the stock price $X$ with barrier $B>S$, strike price $K>0$ and maturity time $T>0$. 

We consider a general Lipschitz continuous payoff $\Psi: C[0, T] \to \mathbb R$. Thus, for a given stock price path $X_t$, the option pays off $\Psi(\{X_t\}_{0 \leq t \leq T})$ if $\sup_{0 \leq t \leq T} X_t \geq B$, and pays off zero otherwise. This will be enough to cover the European, Asian, and lookback style payoffs, for which we will provide explicit expressions for the price.

We assume for simplicity that we are in a market with no interest rates. As is well known, the fair price $C(B, K, T)$ of the option is then given by
$$
C(B, K, T)=\mathbb{E}\left[\Psi(\{X_t\}_{0 \leq t \leq T}) \cdot \mathbf{1}_{\left\{\max _{0 \leq t \leq T} X_{t} \geq B\right\}}\right]
$$

The main theorem of this section is as follows.
\begin{theorem}[Asymptotics for short maturity barrier option]. The fair price $C(B, K, T)$ of the barrier option satisfies the following short time asymptotics as $T \rightarrow 0^{+}$:
$$
C(B, K, T)=P(B, T)\left[\Psi(\{S(\frac{B}{S})^{t/T}\}_{0 \leq t \leq T}) +O(\sqrt{T})\right]
$$
where $P(B, T):=\mathbb{P}\left(\max _{0 \leq t \leq T} X_{t} \geq B\right)$. The implied constant in the $O$ notation depends only on $\sigma$, $\mu$, and $B$.
\end{theorem}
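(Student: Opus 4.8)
The plan is to derive the stated asymptotic directly from Theorem 1 and the Lipschitz hypothesis on $\Psi$. The first step is to express the price through the conditional measure $\mathbb{P}_T$ attached to the event $A_T=\{\max_{0\le t\le T}X_t\ge B\}$. Since $P(B,T)=\mathbb{P}(A_T)>0$ for every $T>0$ (the marginal $X_T$ has full support in $(0,\infty)$), the definition of $\mathbb{P}_T$ gives
$$
C(B,K,T)=\mathbb{E}\left[\Psi(\{X_t\}_{0\le t\le T})\,\mathbf{1}_{A_T}\right]=P(B,T)\,\mathbb{E}_{\mathbb{P}_T}\left[\Psi(\{X_t\}_{0\le t\le T})\right],
$$
so it suffices to prove $\mathbb{E}_{\mathbb{P}_T}[\Psi(\{X_t\})]=\Psi(\{S(B/S)^{t/T}\}_{0\le t\le T})+O(\sqrt{T})$. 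Let $L$ be a Lipschitz constant for $\Psi$ with respect to the supremum norm; then $|\Psi(\gamma)|\le|\Psi(0)|+L\sup_t|\gamma_t|$, and since $\sup_{0\le t\le T}X_t\in L^1$ the random variable $\Psi(\{X_t\})$ is integrable and the conditional expectation above is finite. The deterministic quantity $\Psi(\{S(B/S)^{t/T}\})$ may therefore be brought inside the expectation, and the Lipschitz bound yields
$$
\left|\mathbb{E}_{\mathbb{P}_T}\left[\Psi(\{X_t\})\right]-\Psi(\{S(B/S)^{t/T}\})\right|\le L\,\mathbb{E}_{\mathbb{P}_T}\left[\sup_{0\le t\le T}\left|X_t-S(B/S)^{t/T}\right|\right].
$$

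The second step is a rescaling that reduces the right-hand side to the setting of Theorem 1, which was stated for initial value $1$. Put $\tilde X_t:=X_t/S$; then $\tilde X$ solves the same SDE with the same $\mu,\sigma$ and the same driving Brownian motion but with $\tilde X_0=1$, and the event $A_T$ is precisely $\{\max_{0\le t\le T}\tilde X_t\ge B/S\}$ with $B/S>1$. Hence the conditional measure $\mathbb{P}_T$ is exactly the measure to which Theorem 1 applies for the process $\tilde X$ and barrier $B/S$, giving
$$
\mathbb{E}_{\mathbb{P}_T}\left[\sup_{0\le t\le T}\left|\tilde X_t-(B/S)^{t/T}\right|\right]=O(\sqrt{T}),
$$
and multiplying by $S$ we obtain $\mathbb{E}_{\mathbb{P}_T}[\sup_{0\le t\le T}|X_t-S(B/S)^{t/T}|]=O(\sqrt{T})$. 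Combining this with the two previous displays proves the theorem; the implied constant depends on $\mu$, $\sigma$, $B$ and, through $S$ and $L$, on the initial price and the Lipschitz constant of the payoff.

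There is no serious obstacle once Theorem 1 is available; the only points needing a little care are the integrability of $\Psi(\{X_t\})$ and the requirement that the Lipschitz constant of $\Psi$ be uniform in the horizon $T$ — both automatic for the European, Asian and lookback payoffs, since these are given by the same formula for every $T$ — together with the trivial check that $X\mapsto X/S$ leaves $\mathbb{P}_T$ unchanged. Finally, the explicit formulae announced for the three named payoffs follow by evaluating $\Psi$ on the limiting path $t\mapsto S(B/S)^{t/T}$: the European payoff $\Psi(\{X_t\})=(X_T-K)^+$ produces leading term $(B-K)^+$; the Asian payoff $(\tfrac1T\int_0^T X_t\,dt-K)^+$ produces $\big(\tfrac{B-S}{\ln(B/S)}-K\big)^+$, using $\tfrac1T\int_0^T S(B/S)^{t/T}\,dt=\tfrac{B-S}{\ln(B/S)}$; and the lookback payoff $\max_{0\le t\le T}X_t-K$ produces $B-K$, since the limiting path is increasing and attains its maximum $B$ at $t=T$.
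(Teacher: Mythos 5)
Your proposal is correct and follows essentially the same route as the paper: factor out $P(B,T)$ via the conditional measure $\mathbb{P}_T$, apply the Lipschitz bound for $\Psi$ in supremum norm, and rescale $X_t/S$ so that Theorem 1 with barrier $B/S>1$ gives the $O(\sqrt{T})$ error. Your added remarks on integrability, on the dependence of the implied constant on $S$ and the Lipschitz constant $L$, and the evaluation of the limiting path for the three named payoffs (which the paper defers to Proposition 7) are sound refinements of the same argument.
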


\begin{remark} $P(B, T)$ may be explicitly computed as
$$
P(B, T)=1+\left (\frac{B}{S}\right )^{-1} \Phi\left(\frac{\frac{\sigma^{2} T}{2}-\ln \frac{B}{S}}{\sigma \sqrt{T}}\right)-\Phi\left(\frac{\ln \frac{B}{S} -\frac{\sigma^2 T}{2}}{\sigma \sqrt{T}}\right)
$$
using the probability density function of the running maximum of a geometric Brownian motion. Here $\Phi$ denotes the CDF of the standard normal distribution.
\end{remark}
\begin{proof}
Only for this section, we use the notation $\mathbb E_{\mathbb P_T}$ for the expectation under the probability measure $\mathbb P_T$ given by

$$\mathbb P_T (E) := \frac{\mathbb P(E \cap A_T)}{\mathbb P(A_T)},$$
where $A_T$ is the event
$\{\max X_T \geq \frac{B}{S}\}$.
This is the same convention as in Theorem 1, just with $\frac{B}{S}$ in place of $B$ in the event $A_T$.

Conditioning on $A_T$, we have
$$
\begin{aligned}
C(B, K, T) & =\mathbb{E}\left[\left(\frac{1}{T} \int_{0}^{T} X_{t} d t-K\right)_{+} \mathbf{1}_{\left\{\max _{0 \leq t \leq T} X_{t} \geq B\right\}}\right] \\
& =\mathbb{P}\left(\max _{0 \leq t \leq T} X_{t} \geq B\right) \mathbb{E}_{\mathbb{P}_{T}} \left [\Psi(\{X_t\}_{0 \leq t \leq T})\right]
\end{aligned}
$$

To estimate the last term above, we note that
$$
\begin{aligned}
& \mathbb{E}_{\mathbb{P}_{T}}  [\Psi(\{X_t\}_{0 \leq t \leq T})]-\Psi(\{S(\frac{B}{S})^{t/T}\}_{0 \leq t \leq T})\\
& \leq \mathbb{E}_{\mathbb{P}_{T}}\left[ \left |\Psi(X_t) -\Psi(\{S(\frac{B}{S})^{t/T}\}_{0 \leq t \leq T}) \right |\right] \\
& \leq L \mathbb{E}_{\mathbb{P}_{T}}\left[\sup_{0 \leq t \leq T} \left |X_t -(\frac{B}{S})^{t/T} \right |\right] 
\end{aligned}
$$
where $L$ denotes the Lipschitz constant of $\Phi$.  

We now claim that 
$$\mathbb{E}_{\mathbb{P}_{T}}\left[\sup_{0 \leq t \leq T} |X_t -(\frac{B}{S})^{t/T}|\right] = O(\sqrt T)$$
as $T \to 0$, which will complete the proof. Indeed, making the substitution $Y_t := \frac{X_t}{S}$, we find that $Y_t$ is a geometric Brownian motion satisfying the conditions of Theorem 1, thus 
$$\mathbb{E}_{\mathbb{P}_{T}}\left[\sup_{0 \leq t \leq T} |Y_t - (\frac{B}{S})^{t/T}|\right] = O(\sqrt T)$$
and so the conclusion for $X_t$ follows.
\end{proof}

We may now read off the short time asymptotics for the fair price of the Asian and lookback style payoffs from Theorem 6. We recall that these are given respectively by

$$\Psi_{Asian} (\{X_t\}_{0 \leq t \leq T}) = \left (\frac{1}{T} \int_0^T X_t dt - K \right )_+$$
$$\Psi_{lookback} (\{X_t\}_{0 \leq t \leq T}) = \left(\sup_{0 \leq t \leq T} X_t - K \right)+$$
\begin{proposition} The short time asymptotics for the fair price $C_{European}$, $C_{Asian}$, $C_{lookback}$ of the Asian and lookback style barrier options are given by

$$C_{Asian}(B, K, T)= \left [ 1+\left (\frac{B}{S}\right )^{-1} \Phi\left(\frac{\frac{\sigma^{2} T}{2}-\ln \frac{B}{S}}{\sigma \sqrt{T}}\right)-\Phi\left(\frac{\ln \frac{B}{S} -\frac{\sigma^2 T}{2}}{\sigma \sqrt{T}}\right) \right ] \cdot $$
$$\left [\left(\frac{1}{\ln (\frac{B}{S} )} \right ) \left (\frac{B}{S}  - 1 \right ) - K + O(\sqrt{T})\right] $$

$$C_{lookback}(B, K, T)=\left [ 1+\left (\frac{B}{S}\right )^{-1} \Phi\left(\frac{\frac{\sigma^{2} T}{2}-\ln \frac{B}{S}}{\sigma \sqrt{T}}\right)-\Phi\left(\frac{\ln \frac{B}{S} -\frac{\sigma^2 T}{2}}{\sigma \sqrt{T}}\right) \right ]\left[B - K + O(\sqrt{T})\right]
$$
\end{proposition}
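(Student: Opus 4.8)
The plan is to obtain all three formulae directly from Theorem 6 combined with the explicit expression for $P(B,T)$ recorded in Remark 7. Theorem 6 asserts that for any payoff $\Psi$ which is Lipschitz on $(C[0,T],\|\cdot\|_\infty)$,
$$C(B,K,T)=P(B,T)\left[\Psi\!\left(\{S(B/S)^{t/T}\}_{0\le t\le T}\right)+O(\sqrt T)\right],$$
so the proof reduces to two routine tasks: (i) checking that each of $\Psi_{European}$, $\Psi_{Asian}$, $\Psi_{lookback}$ is Lipschitz on $C[0,T]$ with a Lipschitz constant that does \emph{not} depend on $T$, so that the $O(\sqrt T)$ error inherits an implied constant depending only on $\mu,\sigma,B$; and (ii) evaluating each $\Psi$ on the deterministic limiting path $\gamma^{(T)}_t:=S(B/S)^{t/T}$.

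For step (i) I would observe that each payoff is the composition of the globally $1$-Lipschitz map $x\mapsto(x-K)_+$ on $\mathbb R$ with a functional $C[0,T]\to\mathbb R$ that is itself $1$-Lipschitz for the supremum norm: the endpoint evaluation $\gamma\mapsto\gamma(T)$ in the European case; the running maximum $\gamma\mapsto\sup_{[0,T]}\gamma$ in the lookback case, using $|\sup\gamma_1-\sup\gamma_2|\le\|\gamma_1-\gamma_2\|_\infty$; and the time-average $\gamma\mapsto\frac1T\int_0^T\gamma(t)\,dt$ in the Asian case, using $\frac1T\int_0^T|\gamma_1-\gamma_2|\,dt\le\|\gamma_1-\gamma_2\|_\infty$. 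In each case the Lipschitz constant equals $1$, independently of $T$, so Theorem 6 applies with one and the same implied constant for all three payoffs.

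For step (ii), note that since $B>S$ the curve $\gamma^{(T)}$ is strictly increasing on $[0,T]$, so $\sup_{0\le t\le T}\gamma^{(T)}_t=\gamma^{(T)}_T=S\cdot(B/S)=B$. Hence $\Psi_{European}(\gamma^{(T)})=\Psi_{lookback}(\gamma^{(T)})=(B-K)_+$, which is exactly why the European and lookback barrier prices agree at leading order. For the Asian case, substituting $u=t/T$ turns the running average into $\int_0^1 S(B/S)^u\,du$, an elementary integral whose value is the average appearing in the statement. Multiplying through by $P(B,T)$ and inserting the closed form from Remark 7 then produces the three displayed expansions.

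Once Theorem 6 is available the argument is essentially mechanical; the only point needing genuine attention is the uniformity claimed in step (i) — one must verify that the Lipschitz constants are bounded (in fact equal to $1$) independently of the maturity $T$, since otherwise the $O(\sqrt T)$ term from Theorem 6 would not carry over with an implied constant depending only on $\mu,\sigma,B$. Everything else is a substitution and a one-line integration.
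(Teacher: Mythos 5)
Your proposal is correct and is essentially the paper's own proof made explicit: the paper's argument is the single sentence ``substitute the specific payoffs into Theorem 6,'' and your verification that each of the three payoffs is $1$-Lipschitz in the sup norm (uniformly in $T$) together with the evaluation on the limiting path $S(B/S)^{t/T}$ is precisely the substitution the paper has in mind, combined with the closed form for $P(B,T)$ from Remark 7. One caveat on the Asian case: evaluating the payoff on $S(B/S)^{t/T}$ gives $\frac{1}{T}\int_0^T S(B/S)^{t/T}\,dt=\frac{B-S}{\ln(B/S)}=S\cdot\frac{(B/S)-1}{\ln(B/S)}$, so your assertion that this equals ``the average appearing in the statement'' holds only under the normalization $S=1$; the discrepancy is a missing factor of $S$ in the proposition's displayed formula rather than a flaw in your method, but a careful write-up should flag it instead of silently matching the stated expression.
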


\begin{proof} The expressions follow from substituting the specific expressions for $\Psi$ into the statement of Theorem 6.\end{proof}

\section{Further directions}
We conclude the paper with some natural directions for further research. As mentioned in the introduction, convergence of pure jump Levy processes conditional on taking a large maximum value has been studied extensively. Our present paper concerns the case of geometric Brownian motion, which is purely continuous. We are thus led to ask - what happens in the general case of Levy processes with both continuous and jump components? Do we still observe a single large jump, or does the continuous component dominate growth? Does the limiting process exhibit deterministic behaviour, random behaviour, or both?

We also propose the case of diffusions with general state dependent coefficients as a natural extension - can the results of our paper be extended to general diffusions, under suitable regularity and growth assumptions on the coefficients? This would lead also to extensions of our short maturity option pricing results to the case of general local volatility models.

Finally, we pose the question of obtaining finer detail on the conditional distribution of the paths. Our present work establishes strong convergence of order $\sqrt T$. In analogy with the classical central limit theorem, we may ask if the distribution converges, after a normalization of order $\sqrt T$ to a familiar distribution, such as a Brownian bridge.
\printbibliography
\end{document}